\theoremstyle{plain}
\newtheorem{theorem}{Theorem}
\newtheorem{prop}[theorem]{Proposition}
\newtheorem{lemma}[theorem]{Lemma}
\theoremstyle{definition}
\theoremstyle{remark}
\newtheorem{remark}{Remark}
\newtheorem{todo}{TODO}
\newcommand{\me}{\ensuremath{\mathrm{e}}}
\newcommand{\dif}{\ensuremath{\mathrm{d}}}
\newcommand{\RR}{\ensuremath{\mathbb{R}}}
\newcommand{\lam}{\ensuremath{\lambda}}
\newcommand{\Ea}{\ensuremath{E_\mathrm{A}}}
\newcommand{\ui}{\ensuremath{u_\mathrm{ig}}}
\newcommand{\tr}{\ensuremath{\mathrm{t}}}
\DeclareMathOperator{\re}{Re}
\DeclareMathOperator{\im}{Im}
\newcommand{\norm}[2]{\ensuremath{\|#1\|_{#2}}}
\newcommand{\intr}{\int_\RR}
\newcommand{\spm}{{\ensuremath{{\scriptscriptstyle\pm}}}}
\renewcommand{\sp}{{\ensuremath{{\scriptscriptstyle +}}}}
\newcommand{\sm}{{\ensuremath{{\scriptscriptstyle -}}}}
\newcommand{\beq}{\begin{equation}}
\newcommand{\eeq}{\end{equation}}
\newcommand\br{\begin{remark}}
\newcommand\er{\end{remark}}
\newcommand\bt{\begin{todo}}
\newcommand\et{\end{todo}}
\numberwithin{equation}{section}
\title{Stability of viscous weak detonation waves for Majda's model}
\author{Jeffrey Hendricks}
\address{Department of Mathematics, Brigham Young University, Provo, UT 84603}
\email{jjhendricks@math.byu.edu}
\author{Jeffrey Humpherys}
\address{Department of Mathematics, Brigham Young University, Provo, UT 84602}
\email{jeffh@math.byu.edu}
\thanks{Humpherys was partially supported by NSF grant DMS-0847074 (CAREER)}
\author{Gregory Lyng}
\address{Department of Mathematics, University of Wyoming, Laramie, WY 82071}
\email{glyng@uwyo.edu}
\thanks{Lyng was partially supported by NSF grant DMS-0845127 (CAREER)}
\author{Kevin Zumbrun}
\address{Department of Mathematics, Indiana University, Bloomington, IN 47405}
\email{kzumbrun@indiana.edu}
\thanks{Zumbrun was partially supported by NSF grant DMS-0801745}
\date{Last Updated:  \today}
\date{Last Updated:  \today}
\begin{document}
\begin{abstract} Continuing the program initiated by Humpherys, Lyng, \& Zumbrun \cite{HLZ_majda} for strong detonation waves, we use a combination of  analytical and numerical Evans-function techniques to analyze the spectral stability of \emph{weak detonation waves} in a simplified model for gas-dynamical combustion. Combining these new spectral stability results with the pointwise Green function analysis of Lyng, Raoofi, Texier, \& Zumbrun \cite{LRTZ_JDE07}, we conclude that these waves are nonlinearly stable.
The principal novelty of this analysis is the treatment of weak detonation waves. In contrast to the case of strong detonation waves, weak detonation waves are undercompressive and the stability of these waves is delicate and has not been treated by standard weighted-norm techniques. The present analysis thus provides a case study illustrating the flexibility and power of the Evans-function-based approach to stability. As in the case of strong detonations, we find that all tested waves are spectrally stable, hence nonlinearly stable.
\end{abstract}
\maketitle

\section{Introduction}\label{sec:intro}
\subsection{Weak detonation waves and stability}\label{ssec:back}

\subsubsection{Weak detonations}
In the classical theory of combustion \cite{CourantFriedrichs}, detonation waves are classified as one of three types: strong, weak, and Chapman--Jouguet (CJ) . All of these are compressive waves---the pressure and density increase following the wave. We recall that the CJ detonation is distinguished in the theory; in particular, the CJ detonation travels at the slowest speed of all detonations. Moreover, the point representing the burned state of the CJ detonation separates the detonation branch of the Hugoniot curve into two pieces. The possible burned states on the lower portion of the branch, those corresponding to smaller increases in the pressure, are possible end states for weak detonations. By their nature, these waves occur only rarely \cites{FickettDavis,Williams}. Also, in contrast to the case of a strong detonation which, like a classical gas-dynamical shock, is supersonic ahead of the front and subsonic behind, in a weak detonation the gas flow relative to the reaction front is supersonic both ahead of and behind the front\footnote{The CJ detonation is sonic behind the front.}. 
In this paper, we examine the stability of these waves in a simplified combustion model.

The simplified model that we use is an natural extension of Majda's ``qualitative'' model for gas-dynamical combustion \cite{M_SIAMJAM81}. Majda proposed the nonlinear system of partial differential equations,
\begin{subequations}\label{eq:mm0}
\begin{align}
(u+qz)_t&+f(u)_x=B u_{xx}\,, \label{eq:mm0-u}\\
z_t&=-k\varphi(u)z\,;\label{eq:mm0-z}
\end{align}
\end{subequations}
as a model which is, one the one hand, mathematically tractable, and which, on the other hand, is expected to retain important aspects of the strongly coupled interactions between the nonlinear motion of a gas mixture and chemical reactions involving the different species of gas making up the mixture. 
 In equation \eqref{eq:mm0}, the unknown function  $u=u(x,t)$ is real valued and should be thought of as a stand-in for density, velocity, and temperature; the other unknown $z=z(x,t)$ satisfies $0\leq z\leq 1$ and measures the fraction by mass of reactant (fuel) in a simple one-step reaction scheme; the flux $f$ is a nonlinear convex function; $\varphi$ is the ignition function---it turns on the reaction; and $k$, $q$, and $B$ are positive constants measuring reaction rate, heat release, and viscosity, respectively. The main result of Majda's analysis \cite{M_SIAMJAM81} is a proof of the existence of strong and weak detonations, particular kinds of traveling waves, for the system \eqref{eq:mm0}. These waves are combustion waves which connect an unburned state ($z=1$) to a completely burned state ($z=0$); they are analogues of the corresponding waves in classical combustion theory \cite{CourantFriedrichs}. Notably, Majda showed that some of the strong detonations feature a ``spike'' in agreement with the classical theory. Moreover, Majda's proof shows that weak-detonation solutions of \eqref{eq:mm0} exist only for distinguished values of the parameters in agreement with the observation that such waves should be rare. 
Indeed, Majda's construction is explicit, and it shows that the existence of a heteroclinic orbit corresponding to a weak detonation requires the structurally unstable intersection in the plane of the one-dimensional stable manifold at the unburned state with the one-dimensional stable manifold at the burned state. Here, our focus is on the dynamical stability of these waves as solutions of the evolutionary partial differential equation. We note that these waves are \emph{undercompressive}; that is, from the hyperbolic viewpoint, the ``shock'' formed by the end states does not satisfy the Lax shock condition due to a deficit of incoming characteristics. This feature affects the stability analysis. Indeed, in contrast to the case of strong detonations which are of Lax type, we know of no stability results for these waves which are based on energy estimates and/or weighted norms. The outgoing characteristic is an obstacle to these methods. By contrast, our approach, based on the Evans function, applies to such undercompressive waves.

\subsubsection{Model and waves}
To describe our results more precisely, we now introduce the version of the Majda model to which will be the setting for our analysis\footnote{In the introduction of \cite{HLZ_majda}, there is a careful description of a number of the many variations of the Majda model (scalar balance law coupled to reaction equation) that have appeared in the literature since Majda's original paper. We call all of these models, ``Majda models.'' }.
We begin with the Majda model  \cite{LRTZ_JDE07}:
\begin{subequations}\label{eq:mm}
\begin{align}
u_t+f(u)_x& =B u_{xx}+qk\varphi(u)z\,,\label{eq:mm1} \\
z_t & = D z_{xx}-k\varphi(u)z\,.\label{eq:mm2}
\end{align}
\end{subequations}
 Here, the scalar unknown $u$ combines various aspects of density, velocity, and temperature.  The unknown $z\in[0,1]$ is the mass fraction of reactant. The reaction constants, are the heat release $q>0$ and the reaction rate $k>0$.  Here, $q>0$ indicates an exothermic reaction.
The diffusion coefficients $B$ and $D$ are also assumed to be positive constants. We make the standard assumption, following \cite{M_SIAMJAM81}, that $f\in C^2(\RR)$ with
\begin{equation}\label{eq:f-u}
\frac{\dif f}{\dif u}>0\,,\quad \frac{\dif^2 f}{\dif u^2}>0\,.
\end{equation}
We shall use the Burgers flux,
\beq\label{eq:burgers}
f(u)=\frac{u^2}{2}\,,
\eeq
as the nonlinearity in our numerical calculations below, and we shall restrict the state variable $u$ to positive values. Thus, all of our conclusions based on numerical Evans-function computation are restricted to this form of the nonlinearity.  
Finally, we assume that
the ignition function $\varphi$ is given by
\beq\label{eq:experimentalphi}
\varphi(u)=\begin{cases}
0, &\text{if}\; u\leq u_\mathrm{ig}\,, \\
\me^{-E_A/(u-u_\mathrm{ig})}, &\text{if}\; u>u_\mathrm{ig}\,,
\end{cases}
\eeq
where $E_A>0$ is the activation energy and $u_\mathrm{ig}$ is a fixed ignition threshold.
Detonation waves are solutions of the special form
\[
u(x,t)=\bar u(x-st),\quad z(x,t)=\bar z(x-st),\quad s>0,
\]
which satisfy
\[
\lim_{\xi\to+\infty}(\bar u(\xi),\bar z(\xi)) = (u_\sp,z_\sp)=(u_\sp,1)\;\;\text{and}
\lim_{\xi\to-\infty}(\bar u(\xi),\bar z(\xi)) = (u_\sm,z_\sm)=(u_\sm,0)\,.
\]
These waves move from left to right and leave completely burned gas in their wake. As we describe in more detail below, \emph{weak detonations} also satisfy 
\[
f'(u_\spm)<s\,,
\]
and our interest is in the stability, or sensitivity to perturbation, of these waves as solutions of \eqref{eq:mm}.

\subsubsection{Stability}
To describe our approach to determining stability, we denote by $L$ the linear operator obtained by linearizing about the wave in question\footnote{The precise form of $L$ can be seen below in \eqref{eq:majda}.}. Thus, the approximate evolution of a perturbation $v$ is described by a linear equation of the form $(\partial_t-L)v=0$.  The Evans function, denoted by $E$, is an analytic function associated with the operator $L$. Its zeros $\lambda$ with $\re\lambda\geq 0$ correspond to eigenvalues of $L$. As Proposition \ref{prop:lrtz} below shows, the spectral information encoded in the zeros of the $E$ can be used to draw conclusions about the nonlinear stability of the wave in question.
\begin{prop}[Lyng-Raoofi-Texier-Zumbrun \cite{LRTZ_JDE07}]\label{prop:lrtz}For model \eqref{eq:mm} as described above, if the Evans-function condition,
\begin{equation}\label{eq:evans_condition}
\text{$E(\cdot)$ has precisely one zero in $\{\re\lambda\geq 0\}$ (necessarily at $\lambda=0$)\,,}
\tag{$\star$}
\end{equation}
holds, 
 then a weak detonation wave is $\hat L^\infty\to L^p$ nonlinearly phase-asymptotically orbitally stable, for $p>1$. Here,
\begin{equation}\label{eq:l_infty_hat}
\hat L^\infty(\mathbb{R}):=\{f\in\mathscr{S}'(\mathbb{R})\;:\;(1+|\cdot|)^{3/2}f(\cdot)\in L^\infty(\mathbb{R})\}.
\end{equation}
\end{prop}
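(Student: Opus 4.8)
The plan is to follow the pointwise Green-function program for passing from spectral (Evans-function) information to nonlinear stability, as developed for undercompressive viscous waves \cite{LRTZ_JDE07}. The point of departure is an integral representation of the linearized solution operator $\me^{Lt}$. Writing the eigenvalue problem $(L-\lambda)w=0$ as a first-order ODE system in $\xi$ and constructing bases of solutions decaying as $\xi\to\pm\infty$, one assembles the resolvent kernel (Green function) $G_\lambda(\xi,y)$ of $(L-\lambda)^{-1}$ with the Evans function $E(\lambda)$ appearing in the denominator. Condition \eqref{eq:evans_condition} then guarantees that $G_\lambda$ is meromorphic on $\{\re\lambda\geq 0\}$ with its only singularity a simple pole at $\lambda=0$, whose residue is the rank-one projection onto the translational zero mode $(\bar u',\bar z')$. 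First I would secure the resolvent estimates needed to make the inverse-Laplace representation $G(\xi,t;y)=(2\pi\mi)^{-1}\int_\Gamma\me^{\lambda t}G_\lambda(\xi,y)\,\dif\lambda$ meaningful: high-frequency bounds from consistent splitting for $|\lambda|$ large, and, since the spectral gap closes at the origin, the gap and conjugation lemmas to track $G_\lambda$ uniformly into $\lambda=0$ for $|\lambda|$ small.

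The heart of the matter is the extraction of sharp pointwise bounds on $G(\xi,t;y)$ by deforming $\Gamma$ into $\{\re\lambda<0\}$ and evaluating the resulting integral by stationary phase about the characteristic speeds of the limiting constant-coefficient operators. The kernel then decomposes into a sum of convected Gaussian and diffusion-wave terms that carry signals outward along the characteristics, an \emph{excited} contribution of the form of the profile derivative $(\bar u',\bar z')(\xi)$ multiplied by a scalar error-function kernel that captures the slow, non-decaying response of the translational mode, and terms reflecting the undercompressive geometry. It is precisely the outgoing characteristic on the burned side, a consequence of $f'(u_\spm)<s$, that blocks the $L^1\to L^1$ decay available in the Lax case and dictates the polynomially weighted source space $\hat L^\infty$ in \eqref{eq:l_infty_hat}; the $(1+|\cdot|)^{3/2}$ weight is calibrated to the algebraic rate at which the outgoing signal is transported.

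With these bounds in hand, I would introduce a time-dependent phase shift $\delta(t)$, measure the perturbation $v$ against the shifted profile $(\bar u,\bar z)(\cdot-\delta(t))$, and fix $\delta$ so as to subtract the non-decaying projection onto the zero mode. Differentiating Duhamel's formula produces a closed, coupled system of integral equations for $(v,\delta)$ whose forcing is the quadratic remainder $N(v)$ arising from the nonlinearity of $f$ and of $\varphi(u)z$. Inserting the pointwise Green-function bounds and estimating $N(v)$ (using convexity of $f$ and the smoothness of $\varphi$ away from the ignition threshold), one runs a nonlinear iteration in the template norm dictated by $\hat L^\infty$ and the target $L^p$ decay. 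Verifying that the nonlinear contributions merely reproduce, and never degrade, the linear decay rates closes the iteration and yields the asserted phase-asymptotic orbital stability with $L^p$ decay for $p>1$.

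The step I expect to be the main obstacle is exactly the sharp pointwise analysis in the undercompressive regime and the compatibility it must satisfy with the nonlinear iteration. Because there is no spectral gap and no coercive weighted norm to exploit, the linearized flow decays only algebraically in the direction of the outgoing characteristic, so the margin in the iteration is thin: the $3/2$ weight must be heavy enough to dominate the quadratic source yet light enough to be propagated by the outgoing mode. Reconciling these two demands, so that the fixed-point map is a contraction on the decay manifold, is the crux, and it is here that the Evans-function approach succeeds where energy and weighted-norm methods do not.
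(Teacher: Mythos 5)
Your outline matches the paper's treatment: the paper does not prove Proposition \ref{prop:lrtz} itself but defers to \cite{LRTZ_JDE07}, whose argument is precisely the pointwise Green-function program you describe---resolvent kernel with the Evans function in the denominator, inverse-Laplace representation with low- and high-frequency resolvent bounds, pointwise bounds featuring convected Gaussians plus an excited translational term, a phase shift $\delta(t)$ absorbing the zero mode, and a closing nonlinear iteration via Duhamel's formula under condition \eqref{eq:evans_condition}. Your identification of the outgoing characteristic on the burned side (undercompressivity) as the obstruction to standard weighted-norm methods, and as the reason for the algebraically weighted data space $\hat L^\infty$, likewise agrees with the discussion in the paper and in \cite{LRTZ_JDE07}.
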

\br
We recall that if $X$ and $Y$ are Banach spaces, a traveling wave $\bar u$ is $X\to Y$ \emph{nonlinearly orbitally stable} if, given initial data $u_0$ in $X$ such that if $\norm{\bar u-u_0}{X}$ is sufficiently small, there is a phase shift $\delta=\delta(t)$ such that $\norm{u(\cdot,t)-\bar u(\cdot-\delta(t),t)}{Y}\to 0$ as $t\to \infty$. If also $\delta(t)$ converges to a limiting value $\delta(+\infty)$, the wave is \emph{nonlinearly phase-asymptotically orbitally stable}.
\er
The proof of Proposition \ref{prop:lrtz} is based on the pointwise Green-function techniques developed by Zumbrun and collaborators; see, e.g., \cite{ZH_IUMJ98}. Briefly,
if one is able to obtain sufficient estimates on the Green function $G(x,t;y)$ solving  $(\partial_t-L)G=\delta_y$, it is possible to close a iterative argument to establish a result like Proposition \ref{prop:lrtz}. The main work of \cite{LRTZ_JDE07} is devoted to establishing such bounds under the assumption that condition \eqref{eq:evans_condition} holds. Thus, our primary purpose here is to locate the unstable  zeros (if any) of the Evans function. In this paper we restrict our attention to the case of weak detonations; a parallel Evans-based stability analysis for strong detonations has been done \cite{HLZ_majda}. Because, in all but the most trivial cases, the Evans function is typically too complex to be computed analytically, our approach is based on the combination of an energy estimate to eliminate the possibility of large unstable zeros and the numerical approximations of the Evans function to deal with the remaining, bounded region of the unstable complex plane. As we describe below, a particular challenge associated with performing Evans-function computations for weak-detonation waves is finding the distinguished parameter values for which these waves exist; see \S\ref{ssec:approx} below for further discussion and more details about this issue.

\subsection{Related work: stability, weighted norms \& energy estimates}
There are a number of stability results for strong-detonation wave solutions of various incarnations of  the Majda model that have been obtained directly by combinations of energy estimates, spectral analysis, and weighted norms. A detailed overview of these results can be found in the recent work of Humpherys et al.\ \cite{HLZ_majda}. 
Notably, however,  outside of the Evans-function framework, namely \cites{LZ_PD04,LRTZ_JDE07}, we know of \emph{no} stability results for weak-detonation solutions of the Majda model.  

There are results for weak-detonation solutions of the closely related Rosales-Majda model \cite{RM_SIAMJAM83}:
\begin{subequations}\label{eq:mr}
\begin{align}
u_t&+\left(\frac{u^2}{2}-Qz\right)_x=B u_{xx}\,,\\
z_x&= K\varphi(u)z\,.
\end{align}
\end{subequations}
This model was extracted from the physical equations in the Mach ``$1+\epsilon$'' asymptotic regime by Rosales and Majda. That is, this simplified model describes detonation waves which propagate with a speed close to the sound speed, and the model is expected to capture some of the crucial interactions between the nonlinear gas-dynamical motion of the gas mixture and the chemical reactions. 
Liu \& Yu~\cite{LY_CMP99} and Szepessy~\cite{S_CMP99} have both treated the stability of weak-detonation solutions of \eqref{eq:mr}.

\subsection{Outline}
In \S\ref{sec:prelim} we review the existence problem for strong and weak detonations. Because weak detonations are a structurally unstable phenomenon, we also describe our numerical procedure for approximating these waves.   In \S\ref{sec:spectral}, we set up the spectral stability problem, and we describe the construction of the Evans function, and our algorithm for approximating the Evans function and locating its zeros. We also establish, by means of an energy estimate, an upper bound on the moduli of possible unstable eigenvalues. This limits possible unstable zeros of the Evans function  to a bounded region of the complex plane. The final sections, \S\ref{sec:experiments} and \S\ref{sec:discussion}, contain descriptions, results, and interpretations of our numerical experiments.

\section{Preliminaries}\label{sec:prelim}

\subsection{The profile existence problem}\label{ssec:connections}
\subsubsection{Basic analysis}
As noted above, we seek solutions of \eqref{eq:mm} of the form
\beq\label{eq:twa}
u(x,t)=\bar u(x-st),\quad z(x,t)=\bar z(x-st),\quad s>0,
\eeq
of \eqref{eq:mm} which satisfy
\[
\lim_{\xi\to+\infty}(\bar u(\xi),\bar z(\xi)) = (u_\sp,z_\sp)=(u_\sp,1)\;\;\text{and}
\lim_{\xi\to-\infty}(\bar u(\xi),\bar z(\xi)) = (u_\sm,z_\sm)=(u_\sm,0)\,.
\]
These waves move from left to right and leave completely burned gas in their wake.
Thus, after dropping the bars, we see that the ansatz \eqref{eq:twa} leads from \eqref{eq:mm} to the system of ordinary differential equations,
\begin{subequations}
\begin{align}
-su'+f(u)'& =B u''+qk\varphi(u)z\,,\label{eq:twmm1} \\
-sz'& = D z''-k\varphi(u)z\,.\label{eq:twmm2}
\end{align}
\end{subequations}
 where $'$ denotes differentiation with respect to the variable $\xi:=x-st$. After a simple algebraic rearrangement, we can integrate \eqref{eq:twmm1}, and we obtain, finally, the first-order system
\begin{subequations}\label{eq:tw}
\begin{align}
u'&=B^{-1}\big(f(u)-f(u_\sm)-s(u-u_\sm)-q(sz+D y)\big)\,, \label{eq:tw1}\\
z'&=y\,,\label{eq:tw2}\\
y'&=D^{-1}\big(-sy+k\varphi(u)z\big)\,. \label{eq:tw3}
\end{align}
\end{subequations}
In \eqref{eq:tw}, we have written $y:=z'$ to express the system in first order. We sometimes write this system compactly as $U'=F(U)$ with $U=(u,z,y)^\mathrm{t}$, and we write $A(U)=\dif F(U)$.
We require  that $u_\spm$ satisfy
\begin{equation}
u_\mathrm{ig}<u_\sm
\quad\text{and}\quad
u_\sp<u_\mathrm{ig}\,,
\label{eq:u_plus}
\eeq
so that
\beq
\varphi(u_\sm)>0\,,\quad \varphi(u_\sp)=0\,,\quad \varphi'(u_\sp)=0\,.
\label{eq:phi_endstates}
\end{equation} \noindent
Equation~\eqref{eq:u_plus} is needed so that the unburned state $U_\sp=(u_\sp,1,0)$ is an equilibrium for the traveling-wave equation $U'=F(U)$. Indeed, to guarantee that both $U_\spm$ are equilibria, we evidently require the Rankine-Hugoniot condition
\begin{equation}
f(u_\sp)-f(u_\sm)=sq+s(u_\sp-u_\sm)\,,
\label{eq:rh}
\tag{RH}
\end{equation}
together with the requirements that $y_\spm=0$ and
\(
k\varphi(u_\spm)z_\spm=0
\).
We shall also make use of the convenient shorthand $a_\spm:=f'(u_\spm)$.
If $u_\sp<u_\sm$, the combustion wave is a \emph{detonation}., and detonations are classified as of \emph{strong}, \emph{weak}, or \emph{Chapman-Jouguet} type according to the relationship between $a_\spm$ and the wave speed $s$; see Table \ref{tab:class}.
\begin{table}[ht]
\begin{center}
\begin{tabular}{|l|l|} \hline
Strong  & $a_\sm>s>a_\sp$ \\
Weak  &  $s>a_\sm,a_\sp$ \\
Chapman--Jouguet  &  $a_\sm=s>a_\sp$ \\
\hline
\end{tabular}
\end{center}
\caption{Classification of detonation waves.}
\label{tab:class}
\end{table}
 In this paper, we focus on weak detonations.
\begin{figure}[t] 
   \centering
   \includegraphics[width=9cm]{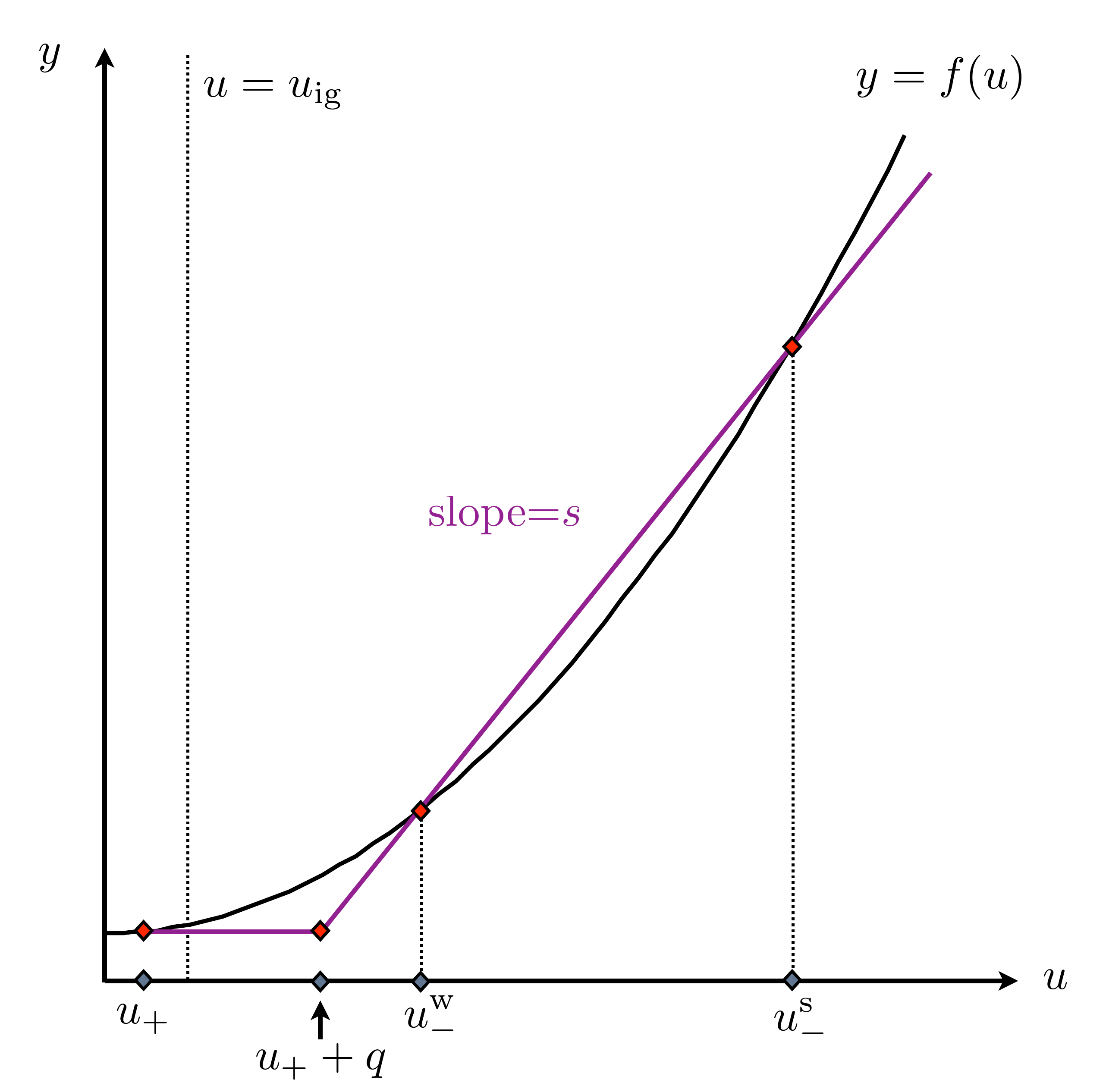}    \caption{The CJ diagram.}
   \label{fig:cj_diagram1}
\end{figure}

The first step in constructing detonation waves is to identify all the possible equilibria of \eqref{eq:tw}. This amounts to solving \eqref{eq:rh}. The structure of solutions is well known.
\begin{prop}[\cites{M_SIAMJAM81,LRTZ_JDE07}]\label{prop:RHexist}

Fix $u_\sp$. Then, there are 0, 1, or 2 solutions of \eqref{eq:rh} depending on the wave speed $s$. In particular,  there is a speed $s^\mathrm{cj}$ depending on $u_\sp$ such that the following holds.
\begin{enumerate}
\item[(a)] For $s< s^\mathrm{cj}$, there exist no solutions $u_\sm>u_\sp$.
\item[(b)] For $s=s^\mathrm{cj}$, there exists one solution $u_\sm^\mathrm{cj}$ (Chapman--Jouguet detonation).
\item[(c)] For $s>s^\mathrm{cj}$, there
exist two states $u^\mathrm{s}_\sm>u_\sm^\mathrm{w}>u_\sp$ for which \eqref{eq:rh}
(but not necessarily \eqref{eq:phi_endstates})
is satisfied (weak and strong detonation).
\end{enumerate}
See \textsc{Figure}~\ref{fig:cj_diagram1}.
\end{prop}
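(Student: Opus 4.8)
The plan is to reduce the Rankine--Hugoniot relation \eqref{eq:rh} to a single scalar equation and to count its roots by exploiting the strict convexity \eqref{eq:f-u} of $f$. With $u_\sp$ and $q$ fixed, I would first rewrite \eqref{eq:rh} as
\[
f(u_\sm)-f(u_\sp)=s\,(u_\sm-u_\sp-q),
\]
and note that for $u_\sm>u_\sp$ the left-hand side is positive while the factor $u_\sm-u_\sp-q$ changes sign at $u_\sm=u_\sp+q$. Hence any solution with $s>0$ must have $u_\sm>u_\sp+q$ (in particular $u_\sm>u_\sp$), and on this half-line \eqref{eq:rh} is equivalent to
\[
s=S(u_\sm):=\frac{f(u_\sm)-f(u_\sp)}{u_\sm-u_\sp-q}.
\]
Counting solutions of \eqref{eq:rh} for a given speed $s$ is then exactly counting preimages of $s$ under the map $S$ on $(u_\sp+q,\infty)$.

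Next I would determine the shape of the graph of $S$. The boundary behavior is immediate: as $u_\sm\downarrow u_\sp+q$ the numerator tends to the positive constant $f(u_\sp+q)-f(u_\sp)$ while the denominator tends to $0^+$, so $S\to+\infty$; and the Burgers flux \eqref{eq:burgers} (more generally any superlinearly growing convex $f$) forces $S\to+\infty$ as $u_\sm\to\infty$ as well. The crucial step is that $S$ has a unique interior critical point. A direct differentiation shows that the sign of $S'$ coincides with that of
\[
P(u_\sm):=f'(u_\sm)(u_\sm-u_\sp-q)-\big(f(u_\sm)-f(u_\sp)\big),
\]
and the one-line identity $P'(u_\sm)=f''(u_\sm)(u_\sm-u_\sp-q)$ shows, via \eqref{eq:f-u}, that $P'>0$ on $(u_\sp+q,\infty)$. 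Since $P(u_\sp+q)=-(f(u_\sp+q)-f(u_\sp))<0$ and $P\to+\infty$, the function $P$ has exactly one zero $u_\sm^{\mathrm{cj}}$; thus $S$ strictly decreases on $(u_\sp+q,u_\sm^{\mathrm{cj}})$ and strictly increases on $(u_\sm^{\mathrm{cj}},\infty)$, with a single global minimum $s^{\mathrm{cj}}:=S(u_\sm^{\mathrm{cj}})$.

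With this monotonicity picture, conclusions (a)--(c) follow by inspecting the level sets $\{S=s\}$: for $s<s^{\mathrm{cj}}$ there are no preimages, for $s=s^{\mathrm{cj}}$ exactly one ($u_\sm^{\mathrm{cj}}$), and for $s>s^{\mathrm{cj}}$ exactly two, a weak root $u_\sm^{\mathrm{w}}\in(u_\sp+q,u_\sm^{\mathrm{cj}})$ and a strong root $u_\sm^{\mathrm{s}}\in(u_\sm^{\mathrm{cj}},\infty)$ with $u_\sm^{\mathrm{s}}>u_\sm^{\mathrm{w}}>u_\sp$. To match Table \ref{tab:class} I would finally read the sign of $a_\sm-s$ off the sign of $P$: at a root of $S(u_\sm)=s$ one has $P(u_\sm)=(f'(u_\sm)-s)(u_\sm-u_\sp-q)=(a_\sm-s)(u_\sm-u_\sp-q)$, and the factor $u_\sm-u_\sp-q$ is positive there, so $a_\sm<s$ at the weak root, $a_\sm>s$ at the strong root, and $s^{\mathrm{cj}}=f'(u_\sm^{\mathrm{cj}})=a_\sm^{\mathrm{cj}}$ at the Chapman--Jouguet point. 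Geometrically this is precisely the statement that the line through $(u_\sp+q,f(u_\sp))$ of slope $s$ meets the convex graph of $f$ in two, one, or zero points with abscissa $u_\sm>u_\sp$, the Chapman--Jouguet case being tangency; this is the content of Figure \ref{fig:cj_diagram1}.

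I expect the only genuine subtlety to be the behavior of $S$ at infinity: strict convexity of $f$ alone does not force $S\to+\infty$, since a convex flux with bounded derivative would let $S$ tend monotonically to a finite limit on the right branch and destroy the two-root regime. Thus conclusion (c) genuinely uses the superlinear growth of the Burgers nonlinearity \eqref{eq:burgers} (equivalently, the same growth gives $P(u_\sm)\to+\infty$). Everywhere else the argument is driven entirely by the sign of $f''$, which is exactly why the statement is robust within the class \eqref{eq:f-u} once the growth at infinity is pinned down.
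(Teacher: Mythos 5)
Your proposal is correct, and it is essentially the standard argument that the paper itself does not spell out: Proposition~\ref{prop:RHexist} is quoted from \cite{M_SIAMJAM81} and \cite{LRTZ_JDE07} with only the geometric picture of \textsc{Figure}~\ref{fig:cj_diagram1} offered in its place, and your reduction to counting preimages of the secant-slope function $S(u_\sm)=\bigl(f(u_\sm)-f(u_\sp)\bigr)/(u_\sm-u_\sp-q)$ is exactly the analytic formalization of that chord/tangency diagram, with the tangency $a_\sm=s$ at the Chapman--Jouguet point recovered from $P(u_\sm)=(a_\sm-s)(u_\sm-u_\sp-q)$. All the individual steps check out: the restriction $u_\sm>u_\sp+q$, the identity $P'=f''\cdot(u_\sm-u_\sp-q)>0$ giving a unique critical point of $S$, and the sign bookkeeping that places the weak root on the decreasing branch ($s>a_\sm$) and the strong root on the increasing branch ($a_\sm>s$), consistent with Table~\ref{tab:class}; one could add the one-line observation that $s>a_\sp$ is automatic at any root, since $s=S(u_\sm)>\bigl(f(u_\sm)-f(u_\sp)\bigr)/(u_\sm-u_\sp)>f'(u_\sp)$ by convexity and $q>0$. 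Your closing caveat is a genuine and correct refinement rather than a defect: under \eqref{eq:f-u} alone, a convex flux with bounded derivative would make $S$ tend to the finite limit $\lim_{u\to\infty}f'(u)$, and the strong root in part (c) would disappear for large $s$ (indeed $P$ may then stay negative, killing the critical point altogether), so the full 0/1/2 trichotomy for \emph{all} $s\gtrless s^{\mathrm{cj}}$ implicitly uses superlinear growth, $f(u)/u\to\infty$, which the Burgers flux \eqref{eq:burgers} supplies; this is precisely the setting in which the paper's numerics and the cited existence theory operate.
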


The case of our principal interest is that the wave is a weak detonation. That is,
\beq\label{eq:weakdet}
s>a_\sp,a_\sm\,.
\eeq
We assume, then, for the remainder of the paper that \eqref{eq:weakdet} holds.
Linearizing \eqref{eq:tw} around the state $U^\mathrm{w}_\sm:=(u_\sm^\mathrm{w},z_\sm,y_\sm)=(u_\sm^\mathrm{w},0,0)$, we find the system of ordinary differential equations $U'=A(U_\sm^\mathrm{w})U$ with 
\begin{equation}
A(U_\sm^\mathrm{w})=\begin{bmatrix}
B^{-1}(a_\sm-s) & B^{-1}(-sq) & qB^{-1}D \\
0 & 0 & 1 \\
0 & kD^{-1}\varphi(u_\sm^\mathrm{w}) & -sD^{-1}
\end{bmatrix}
\,.
\label{eq:minus_lin_twode}
\end{equation}
The coefficient matrix is upper block-triangular, and it is immediate that it has one positive eigenvalue and two negative eigenvalues. Thus, there is a one-dimensional unstable manifold at $U_\sm^\mathrm{w}$. Similarly, we compute directly that
\begin{equation}
A(U_\sp)=
\begin{bmatrix}
B^{-1}(a_\sp-s) & B^{-1}(-sq) & qB^{-1}D \\
0 & 0 & 1 \\
0 & 0 & -sD^{-1}
\end{bmatrix}\,.
\label{eq:plus_lin_twode}
\end{equation}
Again using the structure of the coefficient matrix, we see immediately that there are two negative eigenvalues and one zero eigenvalue. It is straightforward to see in this case that the center manifold is a line of equilibria, so no orbit may approach the rest point $U_\sp$ along the center manifold. This follows from the nature of the ignition function $\varphi$. Since no trajectory can approach the unburned state along the center manifold, a connection corresponding to a weak detonation corresponds to the intersection of the one-dimensional unstable manifold exiting the burned end state with the two-dimensional stable manifold entering the unburned state in the phase space $\mathbb{R}^3$.
\br[Strong Detonations]
Repeating the above calculation in the case that $a_\sm>s>a_\sp$, we see immediately that a strong-detonation connection, by contrast, corresponds to the structurally stable intersection of a pair of two-dimensional manifolds in $\mathbb{R}^3$. See \cite{HLZ_majda} for the examination of the Evans condition \eqref{eq:evans_condition}  in the case of strong detonations.
\er
The next lemma is immediate by the bounds of the stable (unstable) manifold theorem.
\begin{lemma}\label{lem:expdecay}
Traveling-wave profiles $(\bar u, \bar z)$ corresponding to weak or strong
detonations satisfy, for some $C>0$, $\theta>0$,
\beq
\label{eq:expdecay}
\left|(\dif/\dif \xi)^k \Big((\bar u, \bar z)(\xi)- (u,z)_\spm\Big)\right|\le Ce^{-\theta |\xi|},
\qquad
\xi\gtrless 0, \quad 0\le k\le 3\,.
\eeq
\end{lemma}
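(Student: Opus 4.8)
The plan is to regard the profile as a heteroclinic orbit of the autonomous first-order system \eqref{eq:tw} and to read the decay rates directly off the spectra of the linearizations \eqref{eq:minus_lin_twode}--\eqref{eq:plus_lin_twode}. Writing $U(\xi)=(\bar u,\bar z,\bar z')(\xi)$, we have a bounded solution of $U'=F(U)$ with $U(\xi)\to U_\sm$ as $\xi\to-\infty$ and $U(\xi)\to U_\sp$ as $\xi\to+\infty$. Since \eqref{eq:expdecay} is a one-sided statement near each endstate, it suffices to analyze the approach to $U_\sm$ for $\xi\le 0$ and to $U_\sp$ for $\xi\ge 0$ separately, and the two halves are glued by enlarging $C$.

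At the burned state $U_\sm^\mathrm{w}$ the situation is hyperbolic: by the eigenvalue count already recorded below \eqref{eq:minus_lin_twode}, $A(U_\sm^\mathrm{w})$ has no eigenvalue on the imaginary axis (one positive and two negative eigenvalues in the weak case; for a strong detonation the count is two positive and one negative, but the argument is identical). The orbit $U$ exits $U_\sm$ and hence lies on the unstable manifold $W^u(U_\sm)$, whose existence and smoothness are furnished by the unstable manifold theorem. That theorem also provides the quantitative bound: trajectories on $W^u$ converge to $U_\sm$ as $\xi\to-\infty$ at any exponential rate $\theta$ strictly below the smallest positive real part of the spectrum of $A(U_\sm)$, giving $|U(\xi)-U_\sm|\le C\me^{\theta\xi}$ for $\xi\le 0$.

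The approach to the unburned state $U_\sp$ is the delicate point and is where I expect the \emph{main obstacle} to lie, precisely because of the undercompressive character of the wave: the matrix $A(U_\sp)$ in \eqref{eq:plus_lin_twode} carries a genuine center direction (the zero eigenvalue, arising from $\varphi(u_\sp)=\varphi'(u_\sp)=0$ in \eqref{eq:phi_endstates}), so $U_\sp$ is not hyperbolic and the naive stable manifold theorem does not apply. To handle this I will use the structure already identified in the text, namely that the local center manifold at $U_\sp$ is a curve of equilibria. Consequently the center-stable manifold $W^{cs}(U_\sp)$ is invariantly foliated by strong-stable fibers, each the strong-stable manifold of an equilibrium on the center curve. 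Since the orbit $U$ is nonconstant and converges to $U_\sp$, it cannot lie on the curve of equilibria, and it must lie in the strong-stable fiber through $U_\sp$ itself, which is tangent to the span of the two eigenspaces with negative eigenvalues $-(s-a_\sp)/B$ and $-s/D$. This yields $|U(\xi)-U_\sp|\le C\me^{-\theta\xi}$ for $\xi\ge 0$, with any $\theta<\min\{(s-a_\sp)/B,\,s/D\}$; note $s>a_\sp$ holds for both weak and strong detonations, so this step is uniform in the two cases.

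It remains to upgrade the $k=0$ bounds to $0\le k\le 3$, which is routine once the zeroth-order decay is in hand. Because $F(U_\spm)=0$ and $DF,\,D^2F$ are bounded on a neighborhood of each endstate, Taylor expansion gives $U'=F(U)=O(|U-U_\spm|)$, and differentiating the profile equation yields $U''=DF(U)\,U'$ and $U'''=DF(U)\,U''+D^2F(U)[U',U']$; each of these inherits the exponential bound from the right-hand side. The required regularity is available since $\varphi$ is in fact $C^\infty$ (all derivatives of $\me^{-\Ea/(u-\ui)}$ vanish as $u\downarrow\ui$) and the Burgers flux \eqref{eq:burgers} is smooth, so $F\in C^2$ suffices for $k\le 3$. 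Combining the one-sided estimates at $U_\sm$ and $U_\sp$ with a common $C$ and $\theta$ then gives \eqref{eq:expdecay} and proves Lemma \ref{lem:expdecay}.
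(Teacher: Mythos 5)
Your proposal is correct and takes essentially the same route as the paper, whose entire proof is the one-line appeal to the bounds of the stable (unstable) manifold theorem, resting on the earlier observation that the center manifold at $U_\sp$ is a line of equilibria so that the orbit must enter along the (strong-)stable directions. Your strong-stable-fiber argument at the non-hyperbolic rest point and the bootstrap $U''=\dif F(U)\,U'$, $U'''=\dif F(U)\,U''+\dif^2F(U)[U',U']$ for $0\le k\le 3$ simply make explicit the details the paper leaves implicit.
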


\subsubsection{End states and parametrization}
Suppose $\big( \bar u(\xi),\bar z(\xi)\big)$ is a traveling-wave profile of \eqref{eq:mm1}--\eqref{eq:mm2} satisfying \eqref{eq:weakdet}.
Evidently, $(\bar u,\bar z)$ is a steady solution of
\begin{subequations}\label{eq:mmm}
\begin{align}
u_t-su_x+(u^2/2)_x& =B u_{xx}+qk\varphi(u)z\,,\label{eq:mmm1} \\
z_t -sz_x& = D z_{xx}-k\varphi(u)z\,.\label{eq:mmm2}
\end{align}
\end{subequations}
As a preliminary step, we rescale space and time via
\beq\label{eq:scale}
\tilde{x} = \dfrac{s}{B} x\,,\quad\tilde{t} = \dfrac{s^2}{B}t\,;
\eeq
 we also rescale $u$ so that
\beq\label{eq:scale2}
s\tilde u(\tilde x,\tilde t)=u(x,t)\quad\text{and}\quad\tilde z(\tilde x,\tilde t) = z(x,t)\,.
\eeq
In the new scaling equation \eqref{eq:mmm} takes the form
\begin{equation*}
\tilde{u}_{\tilde{t}} - \tilde{u}_{\tilde{x}} + \left(\tilde{u}^2/2\right)_{\tilde{x}} = \tilde{u}_{\tilde{x} \tilde{x}} + \tilde{q} \tilde{k} \tilde{\varphi}(\tilde{u}) \tilde{z}\,,
\quad
\tilde{z}_{\tilde{t}} - \tilde{z}_{\tilde{x}} = \tilde{D} \tilde{z}_{\tilde{x} \tilde{x}} - \tilde{k} \tilde{\varphi}(\tilde{u}) \tilde{z}\,,
\end{equation*}
where $\tilde{k} = kB/s^2$, $\tilde{\varphi}(\tilde{u}) = \varphi\left(\tilde{u}/s\right)$, $\tilde{q} = q/s$, and $\tilde{D} = D/B$.
We omit the tildes from this point forward, and focus on the system
\begin{subequations}\label{eq:majda_simp}
\begin{align}
u_t - u_x + \left(u^2/2\right)_x &= u_{x x} + q k \varphi(u) z\,,\\
z_t - z_x &= D z_{x x} - k \varphi(u) z\,.
\end{align}
\end{subequations}
The scaling analysis shows that we can take $s=1$ and the viscosity coefficient $B=1$.  In this case, \eqref{eq:rh} reduces to
\begin{equation}
\frac{1}{2}(u_\sp^2-u_\sm^2) = u_\sp - u_\sm + q = 0\,. \label{eq:RHsimp}
\end{equation}
Consequently, we can solve for the burned state $u_\sm$ in terms of $q$ and $u_\sp$:
\begin{equation}
u_\sm = 1 - \sqrt{1-2(q+u_\sp(1-u_\sp/2))}\,. \label{eq:uminus}
\end{equation}
Therefore, the physical range for $q,u_\sp$ is
\begin{equation}
\mathcal{U} := \left\{(u_\sp,q) \in \mathbb{R}^2 \,|\, 0\leq u_\sp\leq u_\sm, 0\leq q\leq \frac{1}{2}(u_\sp-1)^2\right\}. \label{eq:uplusq}
\end{equation}

\subsubsection{Profile properties}\label{ssec:properties}
It is worth noting that for $u < \ui$, we can solve the system \eqref{eq:tw} explicitly by a simple integration since $\varphi(u) = 0$ for $u < \ui$.  In this case we find
\begin{subequations}\label{eq:end_states}
\begin{align}
&u(\xi) = 1+ \beta \tanh(-\beta \xi + C)\,, \label{eq:uprofile} \\
&z(\xi) = 1 - C D \me^{-\xi/D}\,, \\
&y(\xi) = C \me^{-\xi/D}\,,
\end{align}
\end{subequations}
where $\beta := \sqrt{u_\sm^2 - 2u_\sm + 2q + 1}$ and $C$ is a constant of integration.  We see, by inspection of the formula \eqref{eq:uprofile} that $u$ is monotone for $u<u_{\mathrm{ig}}$.  We shall now show that this monotonicity persists for $u>\ui$, and we shall use this monotonicity property in  Proposition \ref{prop:hfb} below. 
\br
Majda's construction, by phase plane analysis \cite{M_SIAMJAM81}, of weak detonation profiles for the model \eqref{eq:mm0} ($D=0$) shows that $\bar u$ is monotone in $\xi$. We note that, as discussed in \cite{HLZ_majda}, the traveling-wave equation for \eqref{eq:mm} ($D\neq0$) no longer has a planar phase space, and the resulting dynamics are substantially more complicated. Nonetheless, we are able to give a new argument establishing monotonicity for weak-detonation profiles $\bar u$ in the case that $D\neq 0$.
\er
\begin{prop}[Monotonicity of $\bar u$]\label{prop:mono}
For any weak detonation profile, $\bar u$ is decreasing in the wave variable $\xi$.
\end{prop}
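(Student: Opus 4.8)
The plan is to reduce the profile system \eqref{eq:tw} to a single scalar relation for $\bar u$ and then combine the positivity of the reactant with an integrating-factor (maximum-principle) argument. I work throughout in the rescaled variables of \eqref{eq:majda_simp}, so that $s=B=1$, $f(u)=u^2/2$, and the weak-detonation hypothesis \eqref{eq:weakdet} becomes $u_\sm,u_\sp<1$. First I eliminate $\bar y$: differentiating \eqref{eq:tw1} (now with $s=B=1$ and $f(u)=u^2/2$) and substituting \eqref{eq:tw3} in the form $\bar y+D\bar y'=k\varphi(\bar u)\bar z$ (which holds because $s=1$) gives the scalar relation
\begin{equation}\label{eq:uscalar}
\bar u''=(\bar u-1)\,\bar u'-qk\varphi(\bar u)\,\bar z\,.
\end{equation}
The value of \eqref{eq:uscalar} is that it decouples the sign information: at any critical point $\xi_0$ of $\bar u$ one has $\bar u''(\xi_0)=-qk\varphi(\bar u(\xi_0))\bar z(\xi_0)$, so once $\bar z>0$ is known the only candidate critical points are strict local maxima, which already points toward monotonicity.

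Next I would establish the key auxiliary fact $\bar z>0$ on $\RR$, which I expect to be the main obstacle. From \eqref{eq:tw2}--\eqref{eq:tw3} the quantity $\bar y=\bar z'$ satisfies the linear equation $\bar y'+D^{-1}\bar y=D^{-1}k\varphi(\bar u)\bar z$; integrating with the factor $\me^{\xi/D}$ and using Lemma \ref{lem:expdecay} to discard the boundary contribution at $-\infty$ yields the representation
\begin{equation}\label{eq:yrep}
\bar y(\xi)=\frac{k}{D}\int_{-\infty}^{\xi}\me^{-(\xi-\tau)/D}\,\varphi(\bar u(\tau))\,\bar z(\tau)\,\dif\tau\,.
\end{equation}
The $(\bar z,\bar y)$-block of \eqref{eq:minus_lin_twode} decouples from the $u$-component at linear order and has a one-dimensional unstable subspace spanned by a vector $(1,\mu)$ with $\mu>0$, hence with strictly positive reactant entry; so near $-\infty$ the profile satisfies $(\bar z,\bar y)(\xi)\sim c\,\me^{\mu\xi}(1,\mu)$ for some $c\neq 0$. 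I would then rule out $c<0$: if $\bar z<0$ near $-\infty$, then \eqref{eq:yrep} forces $\bar y<0$ for as long as $\bar z<0$, so $\bar z$ is strictly decreasing and can never rise to its limit $\bar z(+\infty)=1$. Thus $c>0$ and $\bar z>0$ near $-\infty$, and a first-return argument closes the claim: if $\xi_*$ were the first zero of $\bar z$, then \eqref{eq:yrep} would give $\bar y(\xi_*)>0$, contradicting that $\bar z$ decreases into its first zero. Hence $\bar z>0$ (and $\bar y>0$) on all of $\RR$.

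With $\bar z>0$ in hand, the monotonicity follows cleanly. Writing $w=\bar u'$, the relation \eqref{eq:uscalar} becomes the linear first-order identity $w'-(\bar u-1)w=-qk\varphi(\bar u)\bar z\le 0$. Introducing $\rho(\xi)=\exp\big(-\int_0^\xi(\bar u(\tau)-1)\,\dif\tau\big)>0$, I find $(\rho w)'=-\rho\,qk\varphi(\bar u)\bar z\le 0$, so $\rho w$ is non-increasing. Because $\bar u\to u_\sm<1$ as $\xi\to-\infty$ we have $\rho(\xi)\to 0$, while $w=\bar u'\to 0$ by Lemma \ref{lem:expdecay}; hence $\rho w\to 0$ as $\xi\to-\infty$, and a non-increasing function with limit $0$ at $-\infty$ is everywhere $\le 0$. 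Thus $\bar u'=w\le 0$. For strictness, if $\bar u'(\xi_1)=0$ at a finite $\xi_1$ then $\rho w\equiv 0$ on $(-\infty,\xi_1]$, forcing both $\bar u'\equiv 0$ and $\varphi(\bar u)\bar z\equiv 0$ there; since $\varphi(u_\sm)>0$ and $\bar z>0$, this would pin the orbit at the equilibrium $U_\sm^\mathrm{w}$, contradicting that it is a nonconstant connection. Therefore $\bar u'<0$ everywhere.

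The genuinely delicate point is the positivity $\bar z>0$. A direct minimum principle for the $\bar z$-equation degenerates in the non-reacting zone $\{\bar u\le\ui\}$, where $\varphi(\bar u)=0$ and the equation loses its zeroth-order term, so a bare maximum-principle argument stalls exactly where a spurious negative minimum of $\bar z$ could hide. The representation \eqref{eq:yrep}, combined with pinning the exit direction of the orbit from the burned state through the sign of the unstable eigenvector, is what circumvents this degeneracy; I note that it simultaneously yields $\bar z$ strictly increasing, consistent with the explicit formulas \eqref{eq:end_states} in the region $\bar u<\ui$.
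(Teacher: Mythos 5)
Your proof is correct, but it takes a genuinely different route from the paper's. The paper's argument stays entirely at first order: writing $\bar u' = \Phi(\bar u) - q(\bar z + D\bar y)$ with $\Phi$ as in \eqref{eq:u_comp}, it exploits the single observation that $(\bar z + D\bar y)' = k\varphi(\bar u)\bar z \geq 0$, first to show $\bar u \leq u_\sm$ (at any point with $u_\sm < \bar u < 1$ one would have $\bar u' < 0$), and then to rule out turning points by a level-crossing comparison: if $\bar u(\xi_1) = \bar u(\xi_2) = u_*$ with $\xi_1 < \xi_2$ and $\bar u'(\xi_1) < 0$, then $\Phi(u_*) < q(\bar z + D\bar y)(\xi_1) \leq q(\bar z + D\bar y)(\xi_2)$ forces $\bar u'(\xi_2) < 0$ as well. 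Both arguments hinge on the same sign fact $k\varphi(\bar u)\bar z \geq 0$ (indeed, the identity $(\bar z + D\bar y)' = k\varphi(\bar u)\bar z$, obtained by differentiating the integrated equation \eqref{eq:tw1}, is exactly what produces your scalar second-order equation for $\bar u$), but they deploy it differently and with different prerequisites. The paper simply takes $\bar z \geq 0$ for granted---it is built into the notion of a profile, since $z$ is a mass fraction constrained to $[0,1]$ and the constructed waves satisfy this---whereas you correctly identified this positivity as the load-bearing ingredient and proved it from scratch via the Duhamel representation for $\bar y$ together with the sign of the exit direction along the one-dimensional unstable manifold at the burned state; this makes your argument more self-contained but also heavier, importing hyperbolicity and eigenvector asymptotics that the paper never needs. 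Your second stage---the integrating factor applied to $w = \bar u'$, anchored at $\xi = -\infty$ via $\rho \to 0$, which uses $u_\sm < 1$---is a clean derivative-level maximum principle and buys you slightly more than the paper states: strict inequality $\bar u' < 0$, plus $\bar z > 0$ and $\bar z' > 0$ as byproducts. Conversely, the paper's first-order comparison applies essentially unchanged to strong detonations (where $u_\sm > 1$, so your anchoring of $\rho w$ at $-\infty$ breaks down), which is what yields the remark after Proposition \ref{prop:mono} that strong profiles cannot pass from decreasing to increasing; your method does not directly give that. One cosmetic repair: in your strictness step, $\rho w \equiv 0$ on $(-\infty,\xi_1]$ pins only $\bar u \equiv u_\sm$ there, not the full orbit at the equilibrium $U_\sm^{\mathrm{w}}$ (the $\bar z$, $\bar y$ components remain free); but the contradiction you actually invoke---that vanishing of $(\rho w)'$ forces $\varphi(\bar u)\bar z \equiv 0$ while $\varphi(u_\sm) > 0$ and $\bar z > 0$---is correct as stated.
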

\begin{proof}
For notational convenience, we omit the bars on solutions of the traveling-wave equation. We define
\begin{equation}\label{eq:u_comp}
\Phi(u) :=\frac{1}{2}(u^2-u_\sm)-(u-u_\sm) = (u-u_\sm)\left(\frac{1}{2}(u+u_\sm)-1\right)\,,
\end{equation}
so that equation \eqref{eq:tw1} can be written
\begin{equation*}
u' = \Phi(u) - q(z+Dy)\,.
\end{equation*}
First we claim that for weak detonations $u(\xi) \leq u_\sm$ for all $\xi$.  Suppose not.  Then since $u_\sm < 1$ for weak detonations, there exists a $\xi$ such that $u_\sm < u(\xi) < 1$.  In this case we see that $\Phi(u(\xi)) < 0$.
Notice also that
\begin{equation*}
(z+Dy)' = k\varphi(u)z \geq 0\,.
\end{equation*}
Thus, $q(z(\xi)+Dy(\xi)) \geq 0$.  Combining these facts we see that
\begin{equation*}
u'(\xi) = \Phi(u(\xi)) - q(z(\xi)+Dy(\xi))<0\,,
\end{equation*}
 a contradiction.

Consequently, there must exist some $L$ such that $u'(\xi)<0$ for all $\xi \leq L$.  If $u$ is increasing for some $\xi > L$, then there exists $\xi_1<\xi_2$ such that such that $u(\xi_1) = u(\xi_2)=u_*$.  However, $u'(\xi_1)<0$.  So
\begin{equation*}
\Phi(u_*) < q(z+Dy)(\xi_1)\,.
\end{equation*}
Since $z+Dy$ is nondecreasing in $\xi$ this implies $\Phi(u_*)<q(z+Dy)(\xi_2)$.  Therefore $u'(\xi_2) <0$, a contradiction.  The desired result follows.
\end{proof}
\begin{remark}
It is worth noting the implications of the proof for strong detonations.  Strong detonation profiles are not all monotone, the difference being that for strong detonations $u_\sm > 1$.  Consequently the first claim in the proposition does not hold.  The proof does show that for strong detonations a profile cannot move from decreasing to increasing moving left to right.  Thus all non-monotonic strong detonation profiles are of the form $u'(\xi) \geq 0$ for $\xi \leq L$ for some $L$, and $u'(\xi) \leq 0$ for all $\xi \geq L$.
\end{remark}

\subsection{Numerical approximation of profiles}\label{ssec:approx}
\subsubsection{Existence}
In the case that \eqref{eq:weakdet} holds, we have seen that the existence of a weak detonation requires that the intersection of the two-dimensional stable manifold
\(
W^\mathrm{s}(U_\sp)
\)
and the one-dimensional unstable manifold
\(
W^\mathrm{u}(U^{\mathrm{w}}_\sm)
\)
form a heteroclinic orbit in $\RR^3$.  We expect that this will only occur for distinguished values of the parameters. Thus, our numerical method for approximating the profile (a necessary step towards the  computation of the Evans function) must incorporate a method for determining these parameters.

Beyn describes a method for dealing with this issue of structural instability in \cite{B_IMAJNA90}.  Essentially, we stabilize the system by introducing the parameter $k$ into it as an unknown satisfying the equation $k' = 0$; we call this the inflation parameter.  This will increase the dimension of both the stable and unstable manifolds.  Consequently after inflating the state space with the parameter $k$, we now seek solutions that are the structurally stable intersection of the three dimensional stable manifold at positive infinity and the two-dimensional unstable manifold and negative infinity in $\RR^4$.
In doing so we lose control over the value of $k$ when finding solutions.  For given values of the other parameters, the solver will return a value of $k$, if a solution can be found.

\subsubsection{Numerical Implementation}
In order to obtain numerical solutions, we use projective boundary conditions at $\pm M$ which are given by $\Pi_\pm(U(\pm M) - U_\spm) = 0$, where $\Pi_\pm$ is the matrix whose columns are orthonormal vectors spanning $E^\mathrm{s}(U_\sm^\mathrm{w})^\perp$ and $E^\mathrm{u}(U_\sp)^\perp$ respectively.  Because the translation of any solution gives another solution, we also employ a phase boundary condition $u(0) = \frac{1}{2}(u_\sp - u_\sm)$.

The result is a three point boundary value problem.  Because most numerical packages are not built to solve a three-point problem, we transform the problem to a two point boundary value problem by doubling the dimension of the problem and halving the domain.  Thus we move from the system
\begin{equation}
\begin{bmatrix}
u'\\
z'\\
y'\\
k'\\
\end{bmatrix}
= \check U' = \check F(U) , \quad x\in [-M,M]\,,
\end{equation}
to the system
\begin{equation}\label{eq:double}
\begin{bmatrix}
\check U'\\
\check V'\\
\end{bmatrix}
=
\begin{bmatrix}
\check F(U)\\
-\check F(V)\\
\end{bmatrix}
, \quad x\in[0,M]\,.
\end{equation}
In \eqref{eq:double} we include three matching boundary conditions of the form $\check U(0) = \check V(0)$.  We now have a two point boundary value problem which we solve using the \textsc{MatLab} package \texttt{bvp6c}, a sixth order collocation method utilized by the Evans function package \textsc{StabLab}. (See \cite{STABLAB}).

Summarizing, we have 1 phase condition, 3 projective conditions, and 4 matching conditions which matches the 8 variables in the transformed system \eqref{eq:double}.  In order to obtain good approximations to weak-detonation profiles, we must supply the solver with a suitable initial guess. This is necessary to start the Newton iteration.
We find that a rudimentary guess that satisfies the appropriate boundary conditions will suffice for ``tame'' values of the parameters.  However, in many regions of parameter space, this simple approach is insufficient.  We rely on continuation in such regions.  That is, we begin in a conservative parameter region and use the solution for a set of parameters as the initial guess for new parameter values.  In this manner we use successive solutions to move to more extreme parameter values.

\subsubsection{Numerical Profile Results}
Even with the use of continuation there are 
parameter regions for which we are unable to obtain solutions or for which solutions do not exist.  In particular we computed solution profiles for values
\begin{equation*}
(D,E_A,q) \in [10^{-3},15] \times [10^{-3},6] \times [.25,.499].
\end{equation*}
We also tested values varying values of $u_\sp$ but found found no qualitative difference and consequently fixed $u_\sp = 0$.  The solutions we were unable to compute in the corners of this parameter space often correspond to extreme, large values of $k$ determined by the solver.

In \textsc{Figure}~\ref{fig:profiles} we display some examples of the numerically computed solutions of \ref{eq:tw}.  We note in particular that small values of $E_A$ result in a large left tail as seen in \textsc{Figure}~\ref{fig:profiles}\subref{fig:Small_E_prof}, while large values of $D$ result in a long right tail as seen in \textsc{Figure}~\ref{fig:profiles}\subref{fig:Large_D_prof}.  In these cases we used continuation and expanded the computational domain in order for the solutions to be within the prescribed tolerance ($10^{-3}$) of the correct limiting values.

\begin{figure}
\centering
\subfloat[]{
\includegraphics[width=0.4\textwidth]{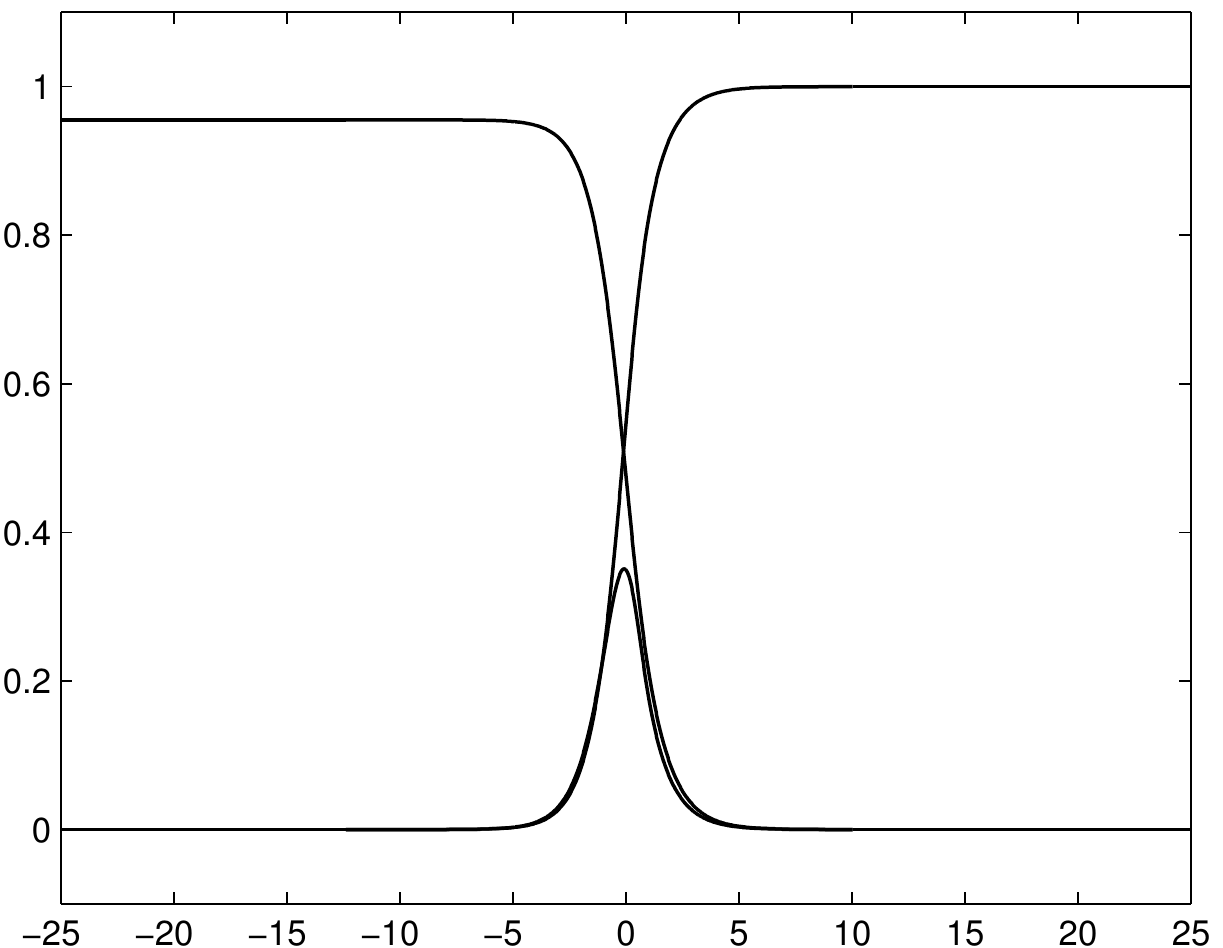}
\label{fig:Large_q_prof}}
\subfloat[]{
\includegraphics[width=0.4\textwidth]{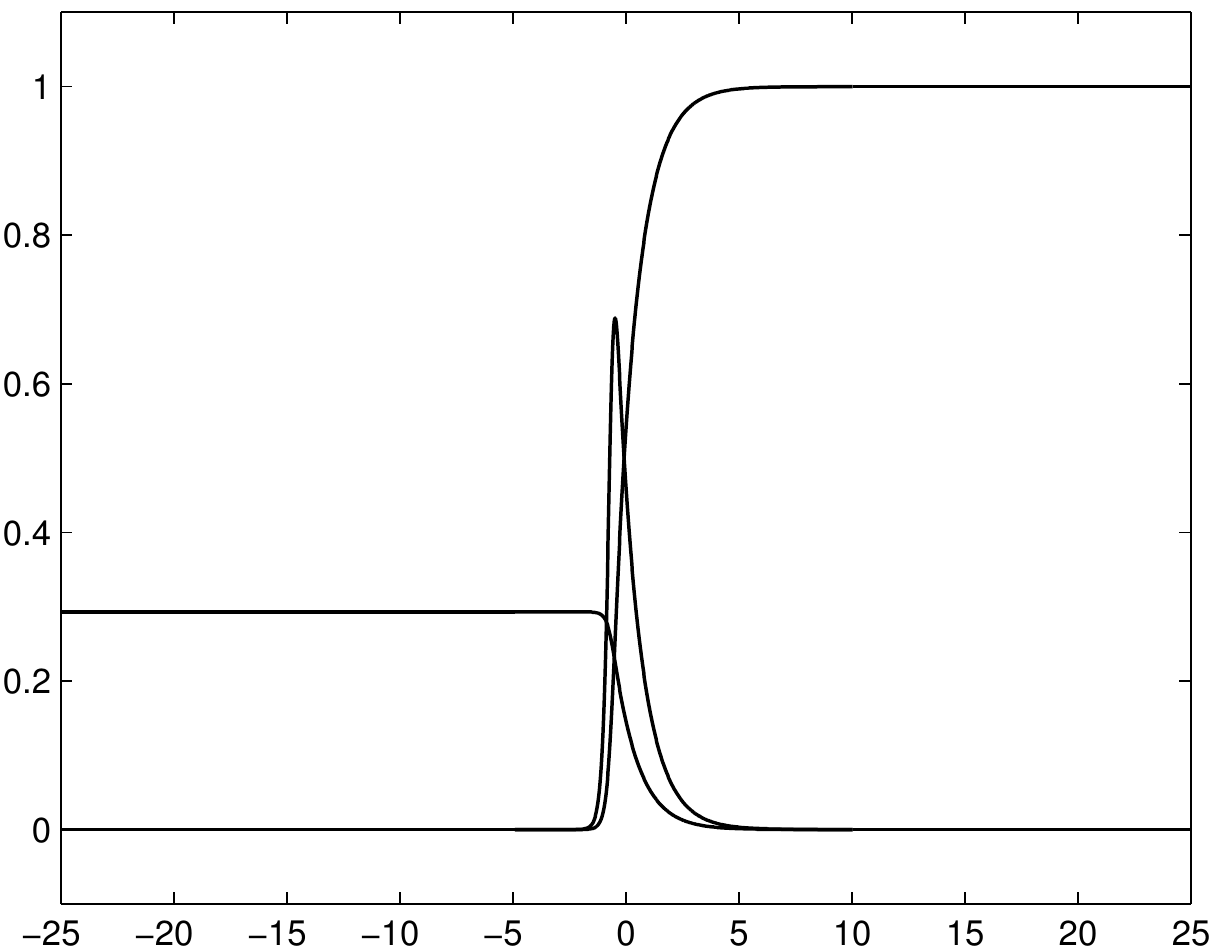}
\label{fig:Small_q_prof}}\\
\subfloat[]{
\includegraphics[width=0.4\textwidth]{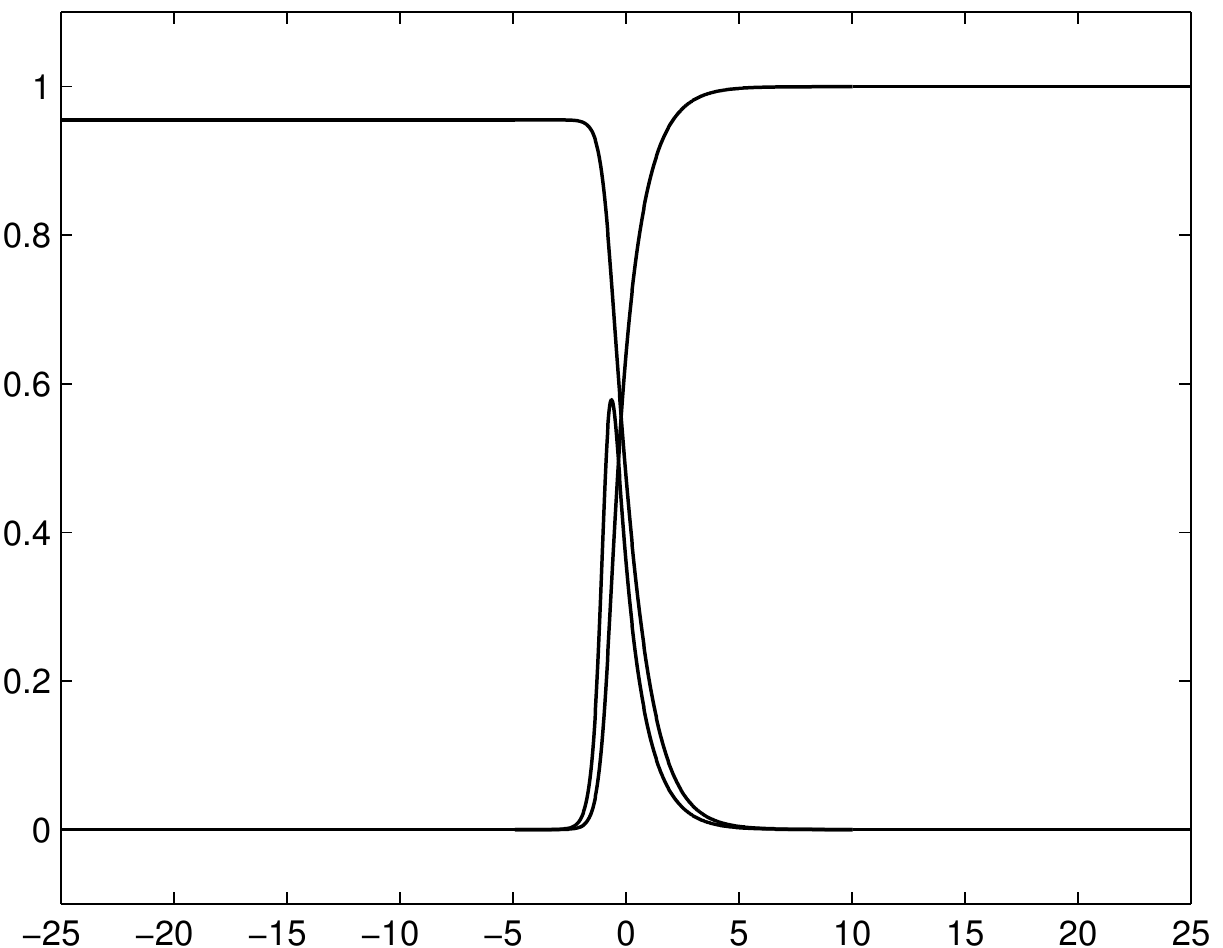}
\label{fig:Large_E_prof}}
\subfloat[]{
\includegraphics[width=0.4\textwidth]{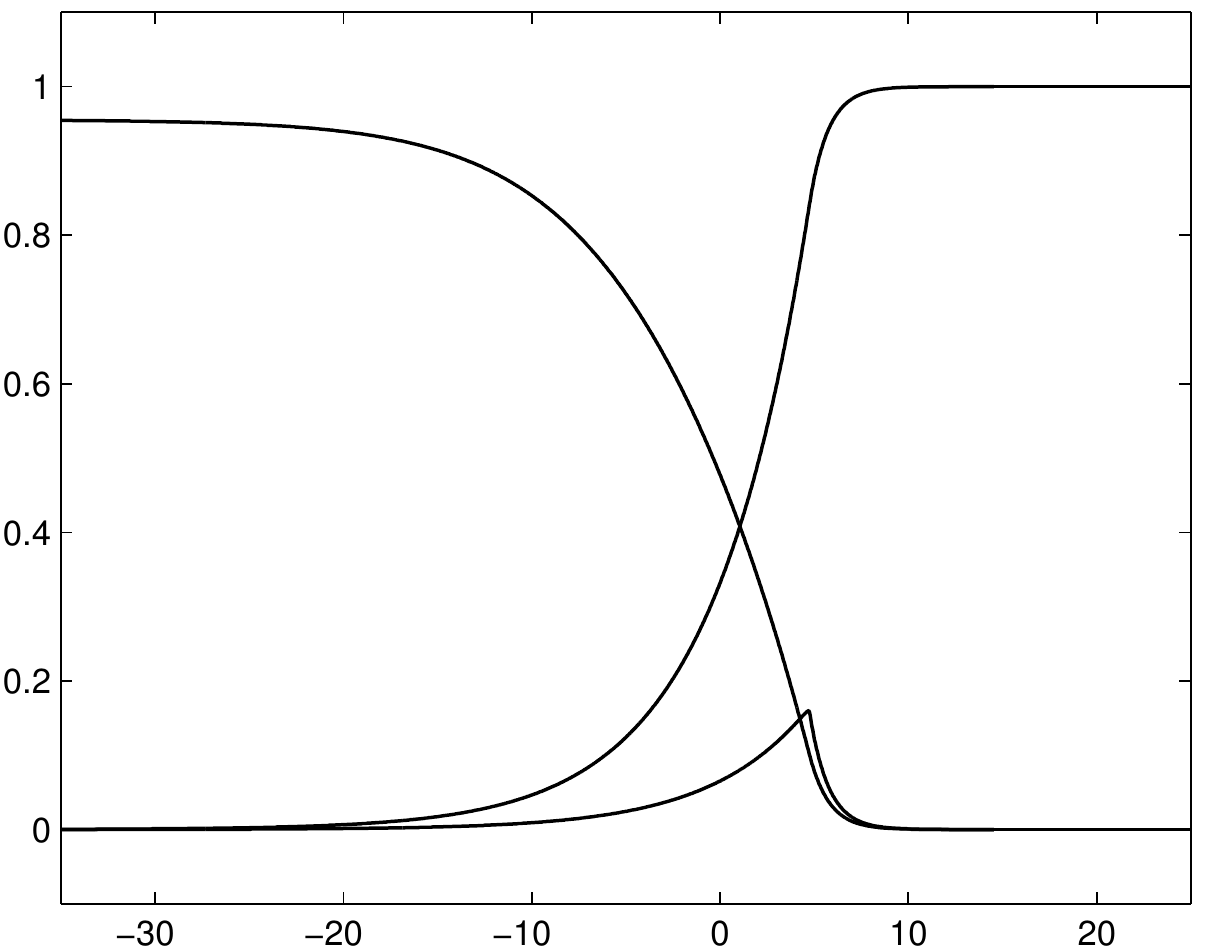}
\label{fig:Small_E_prof}}\\
\subfloat[]{
\includegraphics[width=0.4\textwidth]{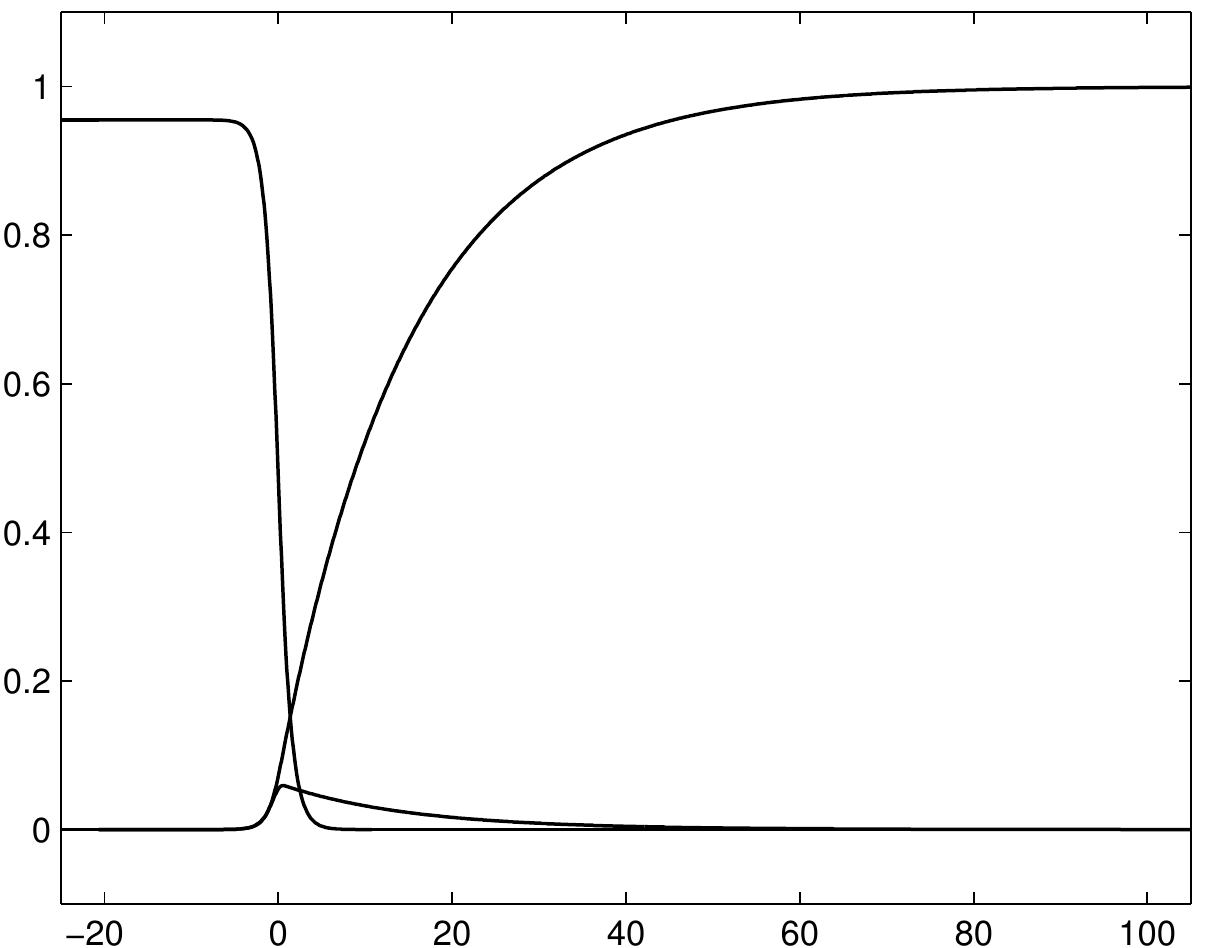}
\label{fig:Large_D_prof}}
\subfloat[]{
\includegraphics[width=0.4\textwidth]{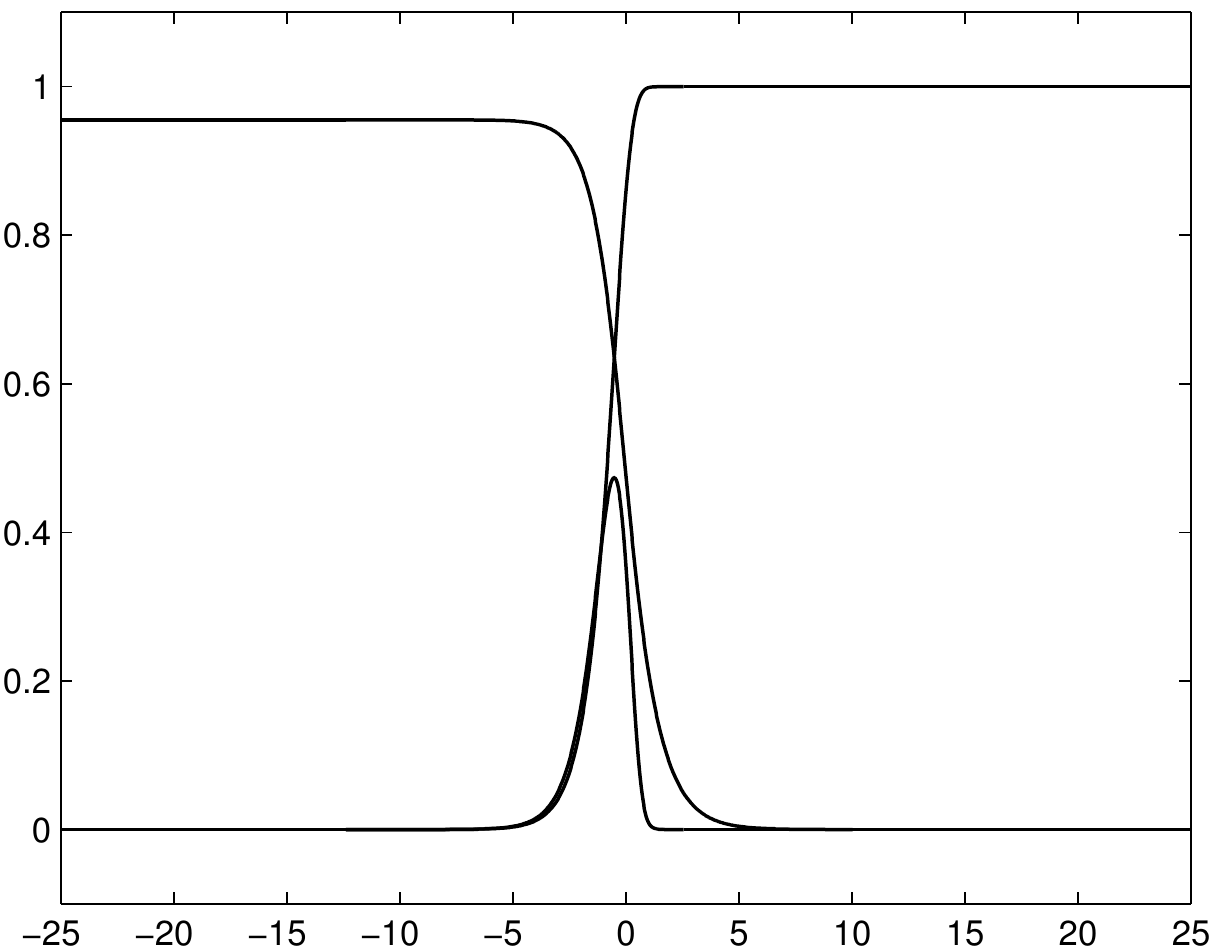}
\label{fig:Small_D_prof}}
\caption{Weak detonation profiles for different parameter values. First we consider the intermediate parameter regime ($D=1$, $\Ea = 1$) for \protect\subref{fig:Large_q_prof} large $q = 0.499$ and \protect\subref{fig:Small_q_prof} small $q = 0.250$.  For large $q$, we also consider \protect\subref{fig:Large_E_prof} large $\Ea$ ($D=1$, $\Ea = 6$, and $q = 0.499$), \protect\subref{fig:Small_E_prof} small $\Ea$ ($D=1$, $\Ea = 10^{-3}$, and $q = 0.499$), \protect\subref{fig:Large_D_prof} large $D$ ($D=15$, $\Ea = 1$, and $q = 0.499$), and \protect\subref{fig:Small_D_prof} small $D$ ($D = 10^{-3}$, $\Ea = 1$, and $q = 0.499)$.
Profiles for $u,y$, and $z$ can be distinguished by noting that $u_\sm > 0$, $y_\sm = y_\sp = 0$, and $z_\sp > 0$.   }
\label{fig:profiles}
\end{figure}

\section{Spectral stability}\label{sec:spectral}
\subsection{Linearized equations \& eigenvalue problem}\label{ssec:eval}
To construct the Evans function, we begin by examining the linearization of \eqref{eq:majda_simp} about the steady solution $(\bar u, \bar z)$.
The linearization is
\begin{subequations}\label{eq:majda}
\begin{align}
&u_t-q(k\varphi'(\bar u)u\bar z+k\varphi(\bar u)z)+((\bar u -1) u)_x=u_{xx},\label{eq:mmlin1}\\
&z_t-z_x=-k\varphi'(\bar u)u\bar z-k\varphi(\bar u)z+D z_{xx},\label{eq:mmlin2}
\end{align}
\end{subequations}
where $u$, $z$ now denote perturbations. Evidently, the corresponding eigenvalue equations are
\begin{subequations}\label{eq:eval}
\begin{align}
&u''=\lambda u-q(k\varphi'(\bar u)u\bar z+k\varphi(\bar u)z)+((\bar u-1) u)'\,,\label{eq:eval1} \\
&z''=D^{-1}\big(\lambda z-z'+k\varphi'(\bar u)u\bar z+k\varphi(\bar u)z\big)\,.\label{eq:eval2}
\end{align}
\end{subequations}
In \eqref{eq:eval} and hereafter $'=\dif/\dif x$.
Alternatively, upon substituting $D z''-\lambda z+z'=k\varphi'(\bar u)u\bar z + k\varphi(\bar u)z$ from \eqref{eq:eval2} into \eqref{eq:eval1}, we can rewrite \eqref{eq:eval1} as
\begin{equation}
u''=\lambda(u+qz)-qz'-qD z''+((\bar u-1) u)'\,.\label{eq:alteval1}
\end{equation}
To construct the Evans function, we write \eqref{eq:eval} as a first-order system. To do so, we define
$W:=(u,z,u',z')^\tr$, and we see that the eigenvalue equation can be written as a linear system
\beq\label{eq:evans_system}
W'=\mathbb{A}(x;\lambda)W\,,
\eeq
where
\beq
\label{eq:Amatrix}
\mathbb{A}(x;\lambda) =
\begin{bmatrix}
0 & 0 & 1 & 0  \\
0 & 0 & 0 & 1 \\
\lambda + \bar u_x-qk\varphi'(\bar u)\bar z & -qk\varphi(\bar u) & \bar u-1 & 0 \\
D^{-1}k\varphi'(\bar u) \bar z & D^{-1}\lambda + D^{-1}k\varphi(\bar u) & 0 & -D^{-1}
\end{bmatrix}\,.
\eeq

In the case of strong detonations, working with the integrated equations has the advantage of removing the translational zero eigenvalue.  While this is not the case for weak detonations, we find that we obtain tighter energy estimates using integrated coordinates.  Thus, we define $w' := u + qz$ so that \eqref{eq:alteval1} becomes
\begin{equation}
u'' = \lambda w' - qz' - qDz'' - qDz'' + ((\bar u-1)u)' \label{eq:integ_u}
\end{equation}
which can be integrated so that the eigenvalue equation becomes
\begin{subequations}\label{eq:integ_system}
\begin{align}
&u' = \lambda w - qz - qDz' + (\bar u-1)u\,, \\
&w' = u+qz\,, \\
&z'' = D^{-1}(\lambda z - z' + k\varphi'(\bar u)u \bar z + k\varphi(\bar u)z)\,.
\end{align}
\end{subequations}
In matrix form with $X: = (u,w,z,z')^\tr$, equation \eqref{eq:integ_system} takes the form
\begin{equation}
X' = \mathbb{B}(x;\lambda)X \label{eq:integ_mform}
\end{equation}
where
\begin{equation} \label{eq:integ_matrix}
\mathbb{B}(x;\lambda) :=
\begin{bmatrix}
\bar u-1                    & \lambda   & -q                                & -qD     \\
1                           & 0         & q                                 & 0       \\
0                           & 0         & 0                                 & 1       \\
D^{-1}k\varphi'(\bar u)\bar z  & 0         & D^{-1}(\lambda + k\varphi(\bar u))   & -D^{-1} \\
\end{bmatrix}.
\end{equation}

Thus we have written the eigenvalue problem as a linear system of first order ODEs where the coefficient matrix depends on $x$ and the spectral parameter $\lambda$.  We observe that, due to Lemma~\ref{lem:expdecay}, the coefficient matrix $\mathbb{B}$ decays exponentially fast as $x\to\pm\infty$ to a limiting matrix $\mathbb{B}_\spm(\lambda)$. The basic idea of the construction of the Evans function is to look for solutions of \eqref{eq:integ_system} which have the ``correct'' asymptotic behavior, as described by the limiting system $X'=\mathbb{B}_\spm(\lambda)X$. Then, roughly speaking, the Evans function can be thought of as a determinant
\[
E(\lambda)=\det(\mathcal{W}^\sp(x;\lambda),\mathcal{W}^\sm(x;\lambda))|_{x=0}
\]
where $\mathcal{W}^\spm$ are bases for the subspaces of solutions of \eqref{eq:integ_mform} that decay at $\pm\infty$. Evidently, a zero of $E(\lambda)$ indicates a linear dependence between these subspaces. Such a linear dependence is equivalent to the existence an eigenfunction. We omit the details of the construction. For more details about the construction of the Evans function for the Majda model, see \cite{LRTZ_JDE07}. For more general background information about the Evans function, see, e.g.,
the survey article of Sandstede \cite{S_HDS02} and  \cites{AGJ_JRAM90,PW_PTRSL92,GZ_CPAM98}.

\subsection{High-frequency bounds}\label{ssec:hfb}

We note that the integrated equations \eqref{eq:integ_system} can be written as
\begin{subequations}
\label{eq:eval5}
\begin{align}
\lambda w - (1-\bar{u}) w' = q\bar{u} z + q(D-1)z' + w''\,, \label{eigenvalue5:a}\\
\lambda z + k (\varphi(\bar{u})-q\varphi'(\bar u)\bar z)z = z' + k\varphi'(\bar u)\bar z w' + Dz''\,.\label{eigenvalue5:b}
\end{align}
\end{subequations}
We now show by an energy estimate that any unstable eigenvalue of the integrated eigenvalue equations lies in a bounded region of the unstable half plane.  While energy estimates for this system have been established by Humpherys et al.\ \cite{HLZ_majda}, here, using the monotonicity of weak profiles established above in Proposition \ref{prop:mono}, we are able to obtain a modestly improved estimate.

\begin{prop}[High-frequency bounds]\label{prop:hfb}
Any eigenvalue $\lam$ of \eqref{eq:eval5} with nonnegative real part satisfies
\beq\label{eq:bound}
\re\lambda+|\im\lambda|\leq \max\left\{ 3, \frac{1}{4 D} + \left( \frac{1}{4} + \frac{1}{2}|D-1|^2\right) k L + k M \right\}
\eeq
where
\beq\label{eq:mdef}
L:= \sup_{x\in\RR}{\varphi'(\bar u(x))\bar z} \quad\text{and}\quad M:=\sup_{x\in\RR} \left((1+q) \varphi'(\bar{u}) \bar{z}-\varphi(\bar{u})\right).
\eeq
\end{prop}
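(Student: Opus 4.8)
The plan is to establish \eqref{eq:bound} by an $L^2$ energy estimate on the integrated eigenvalue equations \eqref{eq:eval5}, using the monotonicity of the profile from Proposition~\ref{prop:mono} to control the convective term. Let $\lambda$ be an eigenvalue with $\re\lambda\ge 0$ and let $(w,z)$ be a corresponding eigenfunction; by Lemma~\ref{lem:expdecay} the coefficients decay exponentially, and since $\re\lambda\ge 0$ the eigenfunction and its first derivatives lie in $L^2(\RR)$ and decay at $\pm\infty$, so all boundary terms in the integrations by parts below vanish. I would take the (Hermitian) $L^2$ inner product of \eqref{eigenvalue5:a} with $w$ and of \eqref{eigenvalue5:b} with $z$, integrate over $\RR$, and integrate by parts to move the second-derivative terms onto the test functions, producing the favorable quadratic terms $\norm{w'}{L^2}^2$ and $D\norm{z'}{L^2}^2$.

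The crucial structural point is the transport term $(1-\bar u)w'$ in \eqref{eigenvalue5:a}. Its contribution has real part $\tfrac12\int_\RR(1-\bar u)(|w|^2)'\,\dif x=\tfrac12\int_\RR\bar u'|w|^2\,\dif x$, and since $\bar u'<0$ by Proposition~\ref{prop:mono} this carries a favorable (nonpositive) sign; this is exactly the improvement over \cite{HLZ_majda}, where monotonicity was unavailable. Splitting the two identities into real and imaginary parts, the real parts control $\re\lambda(\norm{w}{L^2}^2+\norm{z}{L^2}^2)$ together with the diffusion terms and the zeroth-order reaction term $k\int_\RR(\varphi(\bar u)-q\varphi'(\bar u)\bar z)|z|^2$. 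The coupling source $k\varphi'(\bar u)\bar z\,w'$ in \eqref{eigenvalue5:b} is the one that generates $L$: bounding $\re\langle k\varphi'(\bar u)\bar z\,w',z\rangle$ by Young's inequality with the parameter chosen so that the $|w'|^2$ weight is $\tfrac14 k\varphi'(\bar u)\bar z$ produces exactly $\tfrac14 kL\norm{w'}{L^2}^2$, while the complementary $|z|^2$ term, absorbed into the reaction term on the left, recombines — using the definition \eqref{eq:mdef} of $M$ — into $kM\norm{z}{L^2}^2$.

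Next I would treat the imaginary parts, which control $|\im\lambda|(\norm{w}{L^2}^2+\norm{z}{L^2}^2)$. Every term there carries exactly one derivative (the transport terms and the coupling terms), so each can be estimated by Young's inequality against the diffusion quantities $\norm{w'}{L^2}^2$ and $D\norm{z'}{L^2}^2$ already furnished with a good sign by the real parts. In particular the term $\int_\RR z'\overline z$, balanced against $D\norm{z'}{L^2}^2$, is what produces the $\tfrac1{4D}$ in \eqref{eq:bound}, and the coupling term again contributes a $kL$-weighted $\norm{w'}{L^2}^2$; expressing $\norm{w'}{L^2}^2$ back in terms of the diffusion quantities via the $w$-equation brings in the factor $|D-1|^2$ through its source $q(D-1)z'$, yielding the $\bigl(\tfrac14+\tfrac12|D-1|^2\bigr)kL$ coefficient. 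The remaining transport and coupling terms from \eqref{eigenvalue5:a} involve only the bounded quantities $\bar u$ and $q$; using $|\bar u|<1$, $|1-\bar u|\le 1$, and the bound $q\le\tfrac12$ valid on the physical parameter range $\mathcal U$ of \eqref{eq:uplusq}, these are controlled by an absolute $O(1)$ constant, which is the source of the alternative bound $3$.

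The main obstacle will be the bookkeeping in the final absorption step. The diffusion terms $\norm{w'}{L^2}^2$ and $D\norm{z'}{L^2}^2$ must simultaneously absorb derivative contributions arising in both the real and the imaginary estimates, and the control of $\norm{w'}{L^2}^2$ is itself coupled to that of $\norm{z'}{L^2}^2$. One must therefore choose the several Young's-inequality parameters so that this coupled system of inequalities closes and the surviving coefficients assemble into precisely $\tfrac1{4D}+\bigl(\tfrac14+\tfrac12|D-1|^2\bigr)kL+kM$. Adding the real-part bound for $\re\lambda$ and the imaginary-part bound for $|\im\lambda|$ after dividing through by $\norm{w}{L^2}^2+\norm{z}{L^2}^2$, and comparing the reaction-dominated estimate against the $O(1)$ transport estimate, then produces the stated maximum in \eqref{eq:bound}.
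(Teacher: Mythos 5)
Your proposal follows essentially the same route as the paper's proof: multiplying \eqref{eigenvalue5:a} by $w^*$ and \eqref{eigenvalue5:b} by $z^*$, using $\re\intr(1-\bar u)w'w^*=\tfrac12\intr \bar u'|w|^2<0$ via Proposition~\ref{prop:mono} as the key improvement, combining real and imaginary parts to bound $\re\lambda+|\im\lambda|$, and closing a coupled system of Young's inequalities (the paper's $\varepsilon_1,\dots,\varepsilon_5$ together with the weight $\Theta=(kL)^{-1}$ on the $z$-equation, with $q\le 1/2$ and the boundedness of $\bar u$ supplying the constant $3$), with $M$ arising exactly as you say by recombining the pointwise Young remainder $k\varphi'(\bar u)\bar z|z|^2$ with the reaction term. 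Your attribution of the $\bigl(\tfrac14+\tfrac12|D-1|^2\bigr)kL$ coefficient is slightly off in detail---in the paper it comes from the sources $q\bar u z$ and $q(D-1)z'$ in the $w$-equation picked up with weight $\Theta^{-1}=kL$, while the coupling term's $|w'|^2$ share is absorbed directly by the left-hand $\intr|w'|^2$---but this is precisely the bookkeeping you flag as remaining, and it closes as in the paper.
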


\begin{proof}
We multiply \eqref{eigenvalue5:a} by $w^*$ and \eqref{eigenvalue5:b} by $z^*$ and integrate (we integrate the $w'' w^*$, $z''z^*$ and $z'w^*$ terms by parts) to give
\begin{subequations}
\label{energy}
\begin{align}
&\lambda \intr |w|^2 + \intr |w'|^2 - \intr (1-\bar{u}) w'w^*= q \intr \bar{u} zw^* - q(D-1)\intr zw^{*'}, \label{energy:a}\\
&\lambda \intr |z|^2 + D \intr |z'|^2 + k \intr (\phi(\bar{u})- q \phi'(\bar{u}) \bar{z}) |z|^2   = \intr z'z^* - k \intr\phi'(\bar{u})\bar{z} w'z^*. \label{energy:b}
\end{align}
\end{subequations}
Taking the real part of \eqref{energy}, we find
\begin{subequations}
\label{eq:real}
\begin{align}
&\re\lambda \intr |w|^2 + \intr |w'|^2 - \re \left(\intr (1-\bar{u}) w'w^*\right) = \re \left(q \intr \bar{u} zw^* - q(D-1)\intr zw^{*'} \right), \label{real:a}\\
&\re\lambda \intr |z|^2 + D \intr |z'|^2 + k \intr (\varphi(\bar{u})- q \varphi'(\bar{u}) \bar{z}) |z|^2 = -\re \left(k \intr\varphi'(\bar{u})\bar{z} w'z^*\right). \label{real:b}
\end{align}
\end{subequations}
Similarly, taking the imaginary part of \eqref{energy}, we observe
\begin{subequations}
\label{eq:imag}
\begin{align}
&\im\lambda \intr |w|^2 - \im\left(\intr (1-\bar{u}) w'w^*\right) = \im\left(q \intr\bar{u} zw^* - q(D-1) \intr zw^{*'}\right)\,, \label{imag:a}\\
&\im\lambda \intr |z|^2 = \im\left(\intr z'z^* - k \intr \phi'(\bar{u})\bar{z} w'z^*\right) = 0\,. \label{imag:b}
\end{align}
\end{subequations}
Noting that
\begin{equation}
\re\left(\intr (1-\bar u)w'w^*\right) = \frac{1}{2}\intr \bar u' |w|^2 < 0 \label{eq:mono_improv}
\end{equation}
and combining \eqref{eq:real} and \eqref{eq:imag}, we see that
\begin{multline}
\label{eq:ri-a}
\big(\re\lambda + |\im\lambda|\big) \intr |w|^2 + \intr |w'|^2 \\
\leq\sqrt{2} q \intr \bar{u} |z||w| + \sqrt{2} q |D-1| \intr |z| |w'| + \intr |1-\bar{u}| |w'| |w|,
\end{multline}
and
\begin{multline}
\label{eq:ri-b}
 \big(\re\lambda + |\im\lambda|\big) \intr |z|^2 + k \intr (\varphi(\bar{u})- q \varphi'(\bar{u}) \bar{z}) |z|^2 + D \intr |z'|^2\\
 \leq \intr |z'| |z| +  \sqrt{2} k \intr |\phi'(\bar{u})\bar{z}| |w'| |z|\,.
\end{multline}
Using Young's inequality (several times) together with the assumption that $\re\lambda\geq 0$, we find that inequalities \eqref{eq:ri-a} and \eqref{eq:ri-b} imply
\begin{multline}
\label{eq:young-a}
\big(\re\lambda + |\im\lambda|\big) \intr |w|^2 + \intr |w'|^2 \leq \sqrt{2} q \norm{\bar{u}}{\infty} \intr \left(\varepsilon_1 |z|^2 + \frac{|w|^2}{4\varepsilon_1}\right) \\
\quad + \sqrt{2} q |D-1| \intr \left(\varepsilon_2 |z|^2 + \frac{|w'|^2}{4\varepsilon_2}\right) + \norm{1 - \bar{u}}{\infty} \intr \left(\varepsilon_3 |w'|^2 +\frac{|w|^2}{4\varepsilon_3}\right)
\end{multline}
and
\begin{multline}
\label{eq:young-b}
\big(\re\lambda + |\im\lambda|\big) \intr |z|^2 + k \intr (\varphi(\bar{u})- q \varphi'(\bar{u}) \bar{z}) |z|^2 + D \intr |z'|^2 \\
\quad \leq \intr \left(\varepsilon_4 |z'|^2 + \frac{|z|^2}{4\varepsilon_4}\right) + \sqrt{2}k L \intr \left(\varepsilon_5 |w'|^2 + \frac{|z|^2}{4\varepsilon_5}\right)\,.
\end{multline}
We multiply \eqref{eq:young-b} by  $\Theta>0$ and add the result to \eqref{eq:young-a}. The result is
\begin{multline}
\label{eq:bigdaddy}
\big(\re\lambda + |\im\lambda|\big)\left( \intr |w|^2 + \Theta |z|^2\right) + k \intr \Phi(x) |z|^2 + \intr |w'|^2 + \Theta D \intr |z'|^2\\
\leq \intr R_1(x)\Theta |z|^2 + \varepsilon_4 \Theta  \intr |z'|^2
+   R_2\intr |w'|^2 +  R_3 \intr |w|^2 \,.
\end{multline}
where
\begin{align*}
\Phi(x)&=(\varphi(\bar{u})- q \varphi'(\bar{u})\bar{z}) \,,\\
R_1&= \frac{\sqrt{2} \varepsilon_1 q \norm{\bar{u}}{\infty}}{\Theta}  + \frac{\sqrt{2} \varepsilon_2 q |D-1|}{\Theta} + \frac{1}{4\varepsilon_4} + \frac{\sqrt{2} k L}{4\varepsilon_5}\,,\\
R_2&=\sqrt{2}\left(\frac{q |D-1|}{4\varepsilon_2} + \frac{\varepsilon_3 \norm{1-\bar{u}}{\infty}}{\sqrt{2}} + \varepsilon_5 \Theta k L \right)\,,\\
\intertext{and}
R_3&=\sqrt{2}\left(\frac{q \norm{\bar{u}}{\infty}}{4\varepsilon_1} + \frac{\norm{1-\bar{u}}{\infty}}{4\sqrt{2}\varepsilon_3}\right)\,.
\end{align*}
%
%
Finally, to simplify \eqref{eq:bigdaddy}, we choose
\begin{align*}
\varepsilon_1 &= \frac{\sqrt{2}}{8} &
\varepsilon_2 &= \sqrt{2} q |D-1| \\
\varepsilon_3 &= \frac{2}{8 \|1-\bar{u}\|_\infty} &
\varepsilon_4 &= D \\
\varepsilon_5 &= \frac{\sqrt{2}}{4} &
\Theta &= (kL)^{-1}\,,
\end{align*}
where $L$ and $M$ are as in \eqref{eq:mdef}.   We also note that $\norm{\bar{u}}{\infty} \leq 2$, $\|1-\bar{u}\|_\infty\leq 1$, and $q\leq 1/2$.  Thus, we have
\[
  \big(\re\lambda + |\im\lambda|\big)\intr (|w|^2 + \Theta |z|^2) \leq 3 \intr |w|^2 +C \intr \Theta |z|^2\,,
\]
where
\[
C:= \left( \frac{1}{4 D} + \left( \frac{1}{4} + \frac{1}{2}|D-1|^2\right) k L + k M \right)\,.
\]
The result follows.
\end{proof}

\begin{remark}
We easily obtain the following crude bounds on $L$ and $M$:
\begin{align*}
L & \leq \sup_{x\in\RR}{\varphi'(\bar u(x))} \leq \varphi'\left(u_\mathrm{ig}+\frac{\Ea}{2}\right) = \frac{4}{\Ea}\varphi\left(u_\mathrm{ig}+\frac{\Ea}{2}\right) \leq \frac{4}{\Ea} \me^{-2} \approx \frac{0.5413}{\Ea}\,,\\
M&\leq\sup_{x\in\RR}{(1+q) \varphi'(\bar{u})} \leq \frac{6}{\Ea} \me^{-2} \approx \frac{0.8120}{\Ea}\,.
\end{align*}
\end{remark}

\subsection{Evans Function}\label{ssec:evans}
As outlined above, the Evans function $\lambda\mapsto E(\lambda)$ acts as a kind of characteristic polynomial for the linear operator $L$;  that is,
\[
E(\lambda_0)=0\;\Leftrightarrow\,\text{$\lambda_0$ is an eigenvalue of $L$}\,.
\]
Unfortunately, it is seldom possible to explicitly evaluate the Evans function. However, it is possible to approximate it numerically \cite{HZ_PD06}. Since $E$ is analytic on the unstable half plane, it is possible to seek zeros by winding number computations. The origins of this approach to stability can be found in the work of Evans and Feroe \cite{EF_MB77}. These ideas have since been used to address the stability of traveling-wave solutions to a number of systems of interest; we mention, e.g., \cites{PSW_PD93,AS_NW95,B_MC01,BDG_PD02}.

Techniques for the numerical approximation of the Evans function have been described in detail elsewhere \cites{B_MC01,HSZ_NM06,HZ_PD06}, so we only outline the essential features of the computation here.
\begin{enumerate}
\item We approximate the profile on a finite computational domain $[-M_\sm,M_\sp]$. The computational values for plus and minus spatial infinity, $M_\spm$, must be chosen with some care. Writing the traveling-wave equation \eqref{eq:tw} as $U'=F(U)$ together with the condition that $U\to U_\spm$ as $\xi\to\pm\infty$, the typical requirement is that $M_\spm$ should be chosen so that
$ |U(\pm M_\spm)-U_\spm|$
is within a prescribed tolerance.
\item For each profile compute the high-frequency spectral bounds.  To do so we must compute $L$ and $M$ from \eqref{eq:mdef}.  With these values in hand, we can determine a positive real number $R$ large enough to guarantee that no eigenvalues of \eqref{eq:integ_system} lie outside $B_R^+$, the half circle of radius $R$ in the positive half-plane $\re\lambda \geq 0$.  Now we need only establish that the Evans function has no zeros in the bounded region $B_R^+$.
\item Given the solution profiles and appropriate bound from the previous step, we evaluate the Evans function by use of the \textsc{StabLab} package, a \textsc{MatLab}-based package developed for Evans function computation \cite{STABLAB}.  We use the polar-coordinate method \cite{HZ_PD06} for the computation and Kato's method \cite{Kato}*{p. 99} to analytically determine the initial eigenvectors;  details of these methods are described in \cites{BZ_MC02,BDG_PD02,HSZ_NM06}.  Throughout our study, we set the tolerances on \textsc{Matlab}'s stiff ODE solver {\tt ode15s} to be {\tt RelTol = 1e-6} and {\tt AbsTol = 1e-8}.
\item We compute the number of zeros of the Evans function inside the contour $S = \partial B^+_R$ by computing the winding number of the image of $S$ under the Evans function.  This is also computed using the \textsc{StabLab} package by choosing a set of $\lambda$-values on $S$ for which we sum the changes in $\arg E(\lambda)$ as we traverse $S$ counterclockwise.  We add $\lambda$ values to our set if the change in $\arg E(\lambda)$ is greater than 0.2 in any step.  By Rouch\'e's theorem, we are guaranteed to have an accurate computation of the winding number if the argument varies by less than $\pi/2$ between two $\lambda$ values \cite{Henrici}.

As mentioned previously, the shift to integrated coordinates does not remove the translational zero eigenvalue for weak detonations.  In order to use the winding number technique just described, we must first remove the zero eigenvalue in another way.  In this case we find that since the zero eigenvalue has multiplicity one, we simply divide the Evans function by $\lambda$ to remove that zero.
\end{enumerate}

\section{Experiments}\label{sec:experiments}
\subsection{Experimental framework and parameter space}
We now discuss our experiments.  Recall that $s = 1$, and $u_\sm \in (u_+,1)$ is determined by \eqref{eq:uminus}.  Also, the parameter $k$ was used in inflating the state space and thus given values for the other parameters, a value of $k$ for which a solution can be found is determined by the ODE solver.  While $u_\sp$ can take values specified by \eqref{eq:uplusq}, we find no qualitative differences for varied values of $u_\sp$ and thus we fix $u_\sp = 0$ for our experiments here.  We also find no qualitative difference in letting $\ui$ vary and so we set $\ui= 0.1$ throughout.  These values of $u_\sp$ and $\ui$ correspond with those used in \cite{HLZ_majda}.  In this case \eqref{eq:uminus} and \eqref{eq:uplusq}, imply $q \in [0,\frac{1}{2})$ in order to ensure $u_\sm < 1$, the condition for a weak detonation.

We let the parameters $q$, $\Ea$, and $D$ vary through the ranges $[0.250,0.499]$, $[10^{-3},6]$, and $[10^{-3},15]$ respectively.  We find all computed profiles to be stable. However, we were not able to obtain solutions for all values of the parameters in the specified region. More precisely, we attempted to compute solutions for all parameter values in the grid
defined by $\mathscr{P}=\mathscr{Q}\times\mathscr{E}_A\times\mathscr{D}$ where 
\begin{multline*}
\mathscr{Q}= \{0.4990, 0.4764, 0.4537, 0.4311, 0.4085, 0.3858, 0.3632, 0.3405\} \\
	\cup\{0.3179, 0.2953, 0.2726, 0.2500\}\,,
\end{multline*}
\begin{multline*}
\mathscr{E}_A=\{10^{-3},  0.1437, 0.2864, 0.4291, 0.5719, 0.7146, 0.8573, 1\} \\
\cup\{1.5556, 2.1111, 2.6667, 3.2222, 3.7778, 4.3333, 4.8889, 5.4444, 6\}\,,
\end{multline*}
and
\begin{multline*}  
\mathscr{D}= \{10^{-3}, 0.1437 , 0.2864, 0.4291, 0.5719, 0.7146, 0.8573, 1, 2.5556, 4.1111\} \\
	\cup\{5.6667, 7.2222, 8.7778, 10.3333, 11.8889, 13.4444, 15\}\,.
\end{multline*}
Our computational grid in parameter space thus contains $12\times17\times17 = 3468$ combinations of parameters. However, we were unable to solve the traveling-wave equation \eqref{eq:tw} within our specified tolerances for a number of points in $\mathscr{P}$.  In particular, as we move toward the boundaries of this parameter space we typically find that we must take smaller continuation steps until we hit a point that continuing is no longer feasible; the successes in various regions of $\mathscr{P}$ are tabulated in Table \ref{tab:success}.
\begin{table}
\begin{centering}
\begin{tabular}{cc|c}
$D$ Range & $\Ea$ Range  & Success \\ \hline 
$\leq 1$ & $\leq 1$ & 707/768 \\
$\geq 1$ & $\leq 1$ & 861/960 \\
$\leq 1$ & $\geq 1$ & 327/960 \\
$\geq 1$ & $\geq 1$ & 278/1200
\end{tabular}
\end{centering}
\caption{Success rates for the profile solver in various regions of $\mathscr{P}$.}
\label{tab:success}
\end{table}
As the table illustrates, the majority of the troubles come from large $\Ea$ and small $q$, both of
which result in increasingly large values of $k$.  Although we cannot
find solutions for much of the grid with $\Ea\geq1$, we can find at least
one solution for all values of $\Ea$ checked, but often only for larger
values of $q$. We find that as we allow $q$ to decrease, the parameter $k$ increases to extreme values.  For example, if we fix $D = 1$ and $\Ea = 1$, then for $q = 0.499$, the corresponding value of $k$ is $8.177$.  However for $q = 0.250$, $k$ increases to approximately $7,913$.

\br[Existence of Profiles]\label{rem:failure}
One of the striking features of Table \ref{tab:success} is the relatively poor performance of the profile solver when $\Ea\geq1$. Indeed, the failure rate is really striking for those points in $\mathscr{P}$ corresponding to largest tested values of $D$. However, this is perhaps not so surprising due to the fact that these larger values of $D$ represent a mismatch in the strength of diffusive effects (``gas-dynamical'' viscosity versus species diffusion) in the model; one might expect the resulting multiple scales to introduce a kind of stiffness into the traveling-wave computation. 

Moreover,  we note that in the limit of large activation energy, $\phi\to0$, and the explicit solution to the traveling-wave equation for $\phi\equiv0$ (see \S\ref{ssec:properties}) fails to satisfy the boundary condition at $-\infty$.  The process of increasing the parameter $\Ea$ flattens and decreases the values of the ignition function in for $u$ values in $(u_\sp,u_\sm^\mathrm{w})$,  and our experiments show that the inflation parameter $k$ increases exponentially as $\Ea$ increases. Sample $k$ values  illustrating this growth are given in \S\ref{ssec:energy} below.  
Evidently, the effect of this growth in $k$ is to offset the decay of values of $\phi$ so that that product
$
k\phi(u)
$---which is accounts for the only appearance of $k$ in the traveling-wave equation \eqref{eq:tw}---is stabilized away from values for which no solution exists. However, our imposed error tolerances do not permit an unfettered growth of $k$ in the numerical computations. One possible way to sidestep this difficulty, as discussed in more detail in \S\ref{sec:discussion} below, would be to try using $q$ as the inflation parameter. In fact,  a further investigation of the of the traveling-wave equation including, in particular, a detailed study of the orientation of the one-dimensional unstable manifold at $(u_\sm^\mathrm{w},0,0)$ in the phase space $\RR^3$ would be quite interesting, but
we leave this issue for future work. 
\er

\subsection{Activation energy}\label{ssec:energy}
First we consider activation energy, $\Ea$, allowing it to take values in $\mathscr{E}_A\subset[10^{-3},6]$.  We find that just as $k$ increases with decreasing values of $q$, it also increases as $\Ea$ increases.  For example, if we fix $q = 0.499$ $D = 1$, then for $\Ea = 10^{-3}$ we find $k = 0.236$.  However for $\Ea = 6$ we have $k \approx 21,600$.
\begin{figure}
\centering
\subfloat[]{
\includegraphics[width=0.4\textwidth]{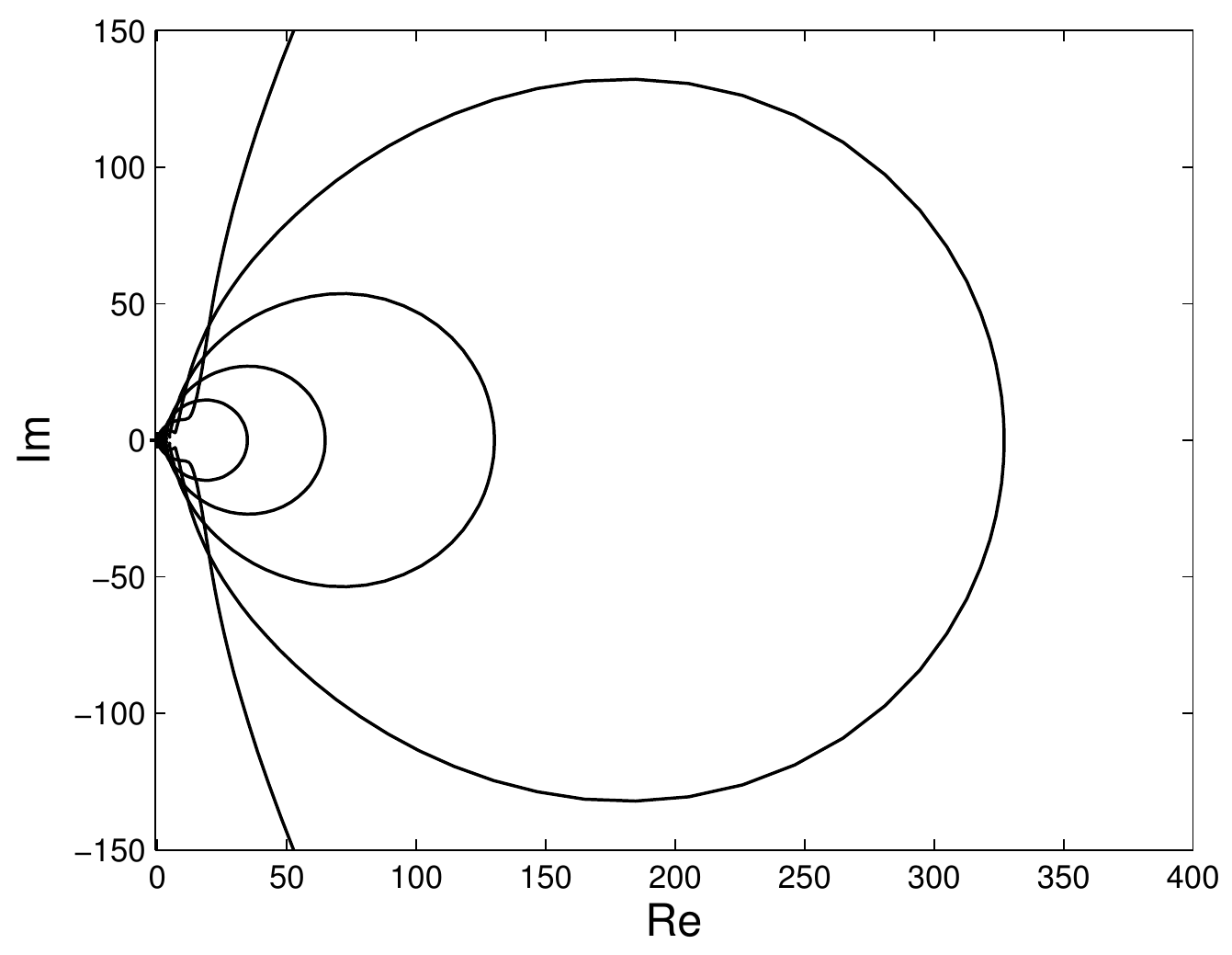}
\label{fig:Large_E}}
\subfloat[]{
\includegraphics[width=0.4\textwidth]{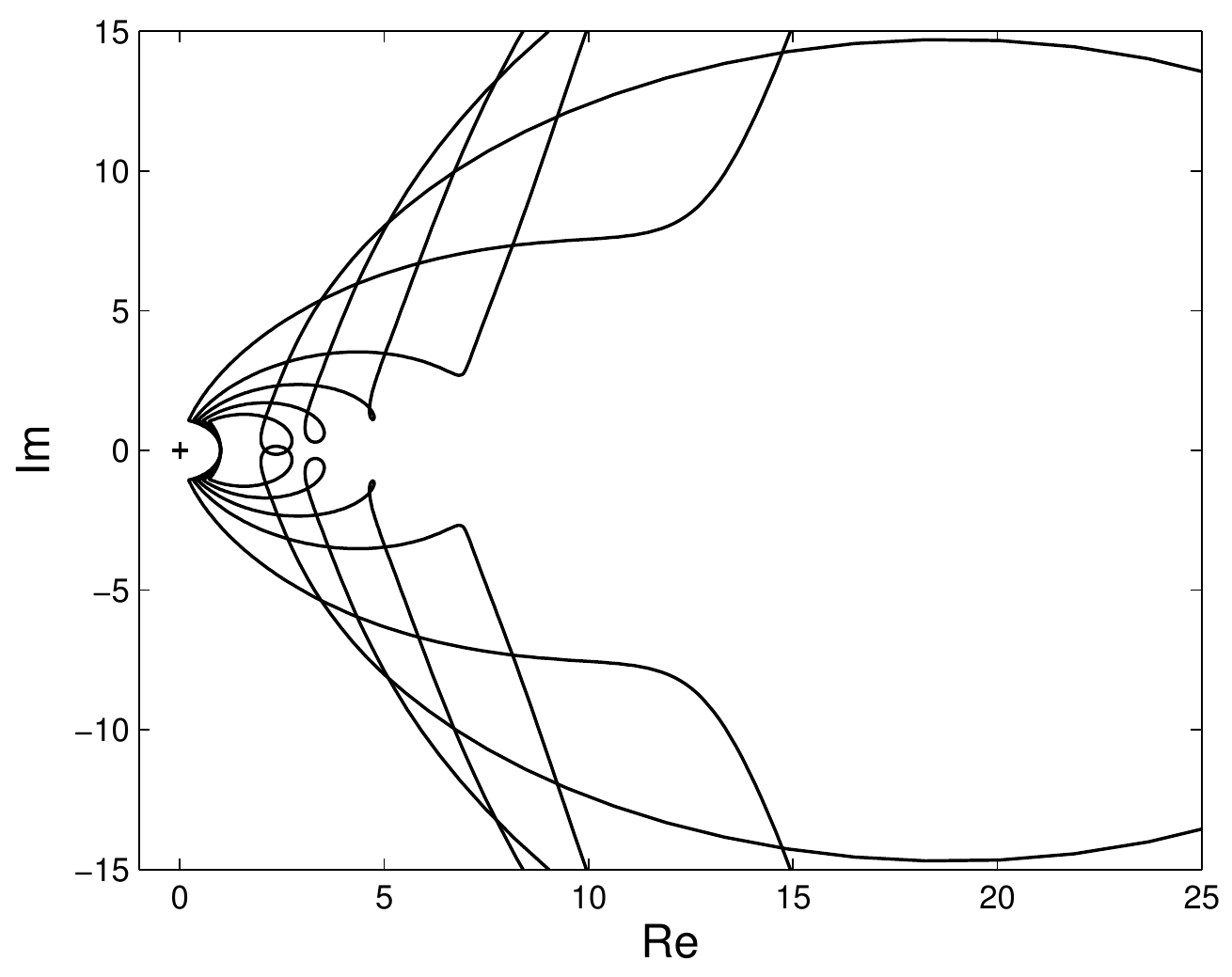}
\label{fig:Large_E_zoom}}\\
\subfloat[]{
\includegraphics[width=0.4\textwidth]{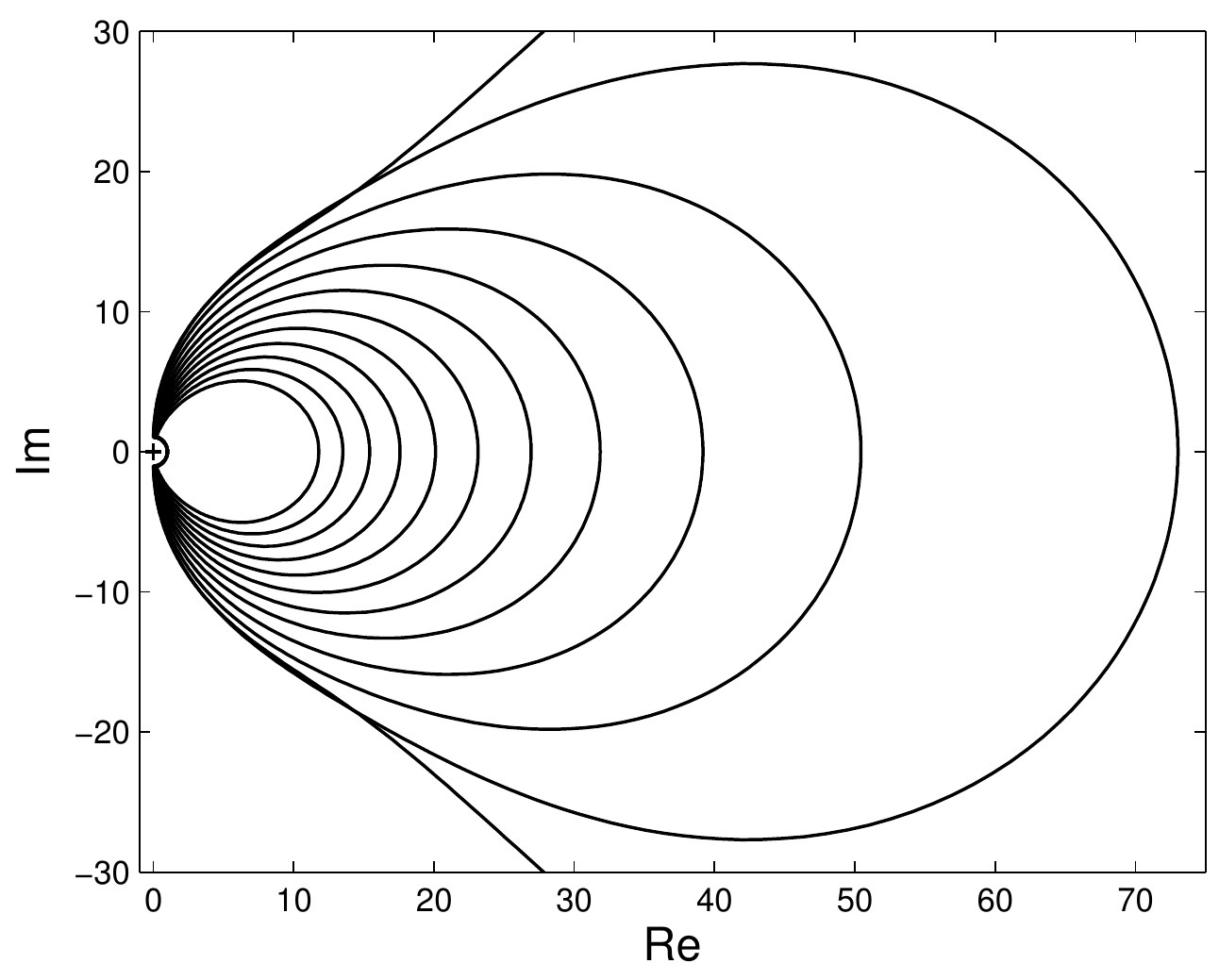}
\label{fig:Small_E}}
\subfloat[]{
\includegraphics[width=0.4\textwidth]{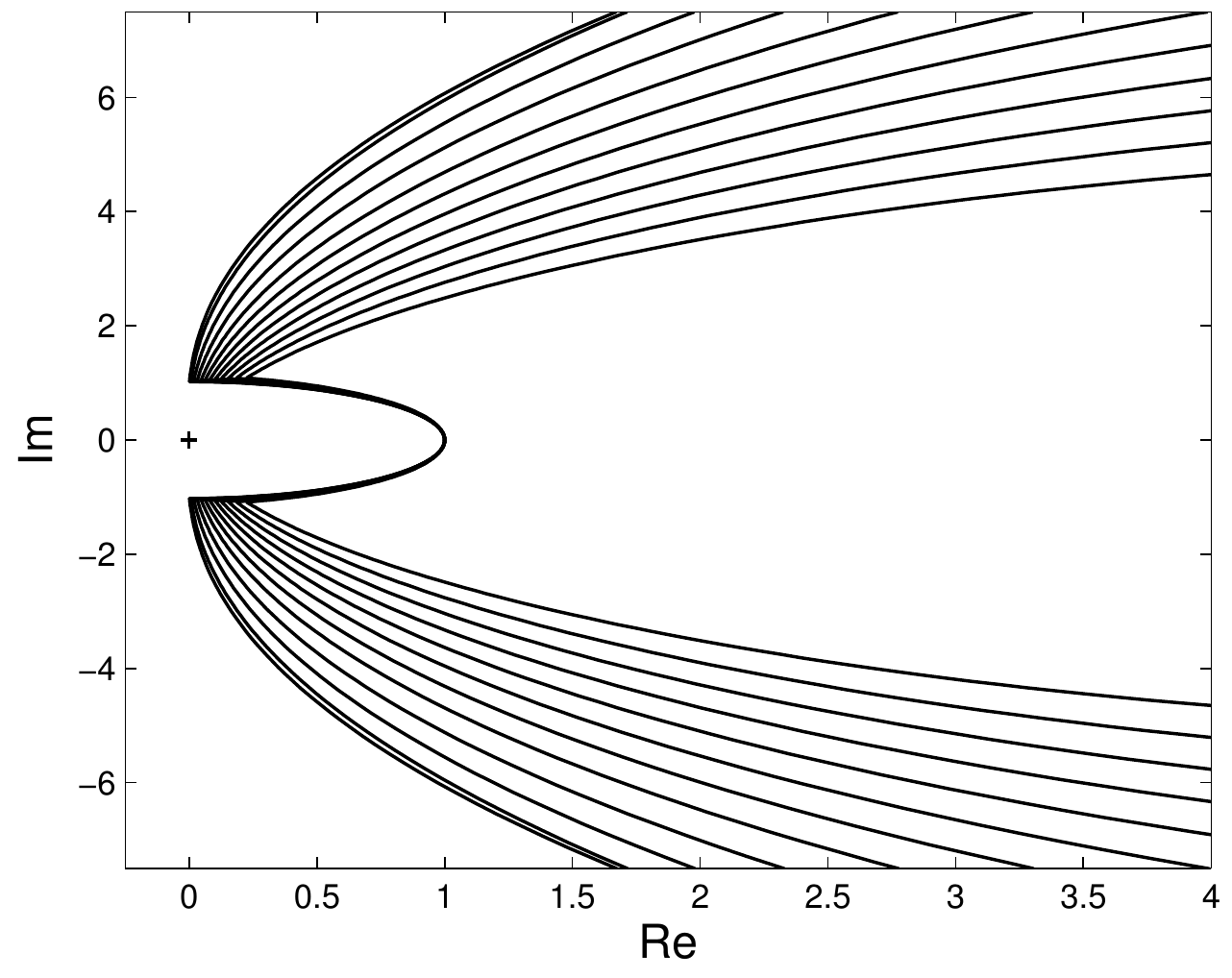}
\label{fig:Small_E_zoom}}
\caption{Evans function output for extreme values of $\mathcal{E}_A$ with $D = 1$.   The parameter values are \protect\subref{fig:Large_E} $\Ea = 3.2$ with $q$ varying through $[.4,.499]$ and \protect\subref{fig:Small_E} $\Ea=10^{-3}$ with $q$ varying through $[.25,.499]$.  Figures \protect\subref{fig:Large_E_zoom} and \protect\subref{fig:Small_E_zoom} are zoomed in versions of \protect\subref{fig:Large_E} and \protect\subref{fig:Small_E} .}
\label{fig:E_evans}
\end{figure}
%
In \textsc{Figure}~\ref{fig:E_evans} \subref{fig:Large_E} and ~\subref{fig:Large_E_zoom}, we see the Evans function output for larger values of $\Ea$. 
%
In \textsc{Figure}~\ref{fig:E_evans} \subref{fig:Small_E} and ~\subref{fig:Small_E_zoom}, we see the Evans function output for small values of  $\Ea$. 

\subsection{Viscosity}
We also consider values of the viscosity constant $D$ in the range $\mathscr{D}\subset[10^{-3},15]$.
\begin{figure}
\centering
\subfloat[]{
\includegraphics[width=0.4\textwidth]{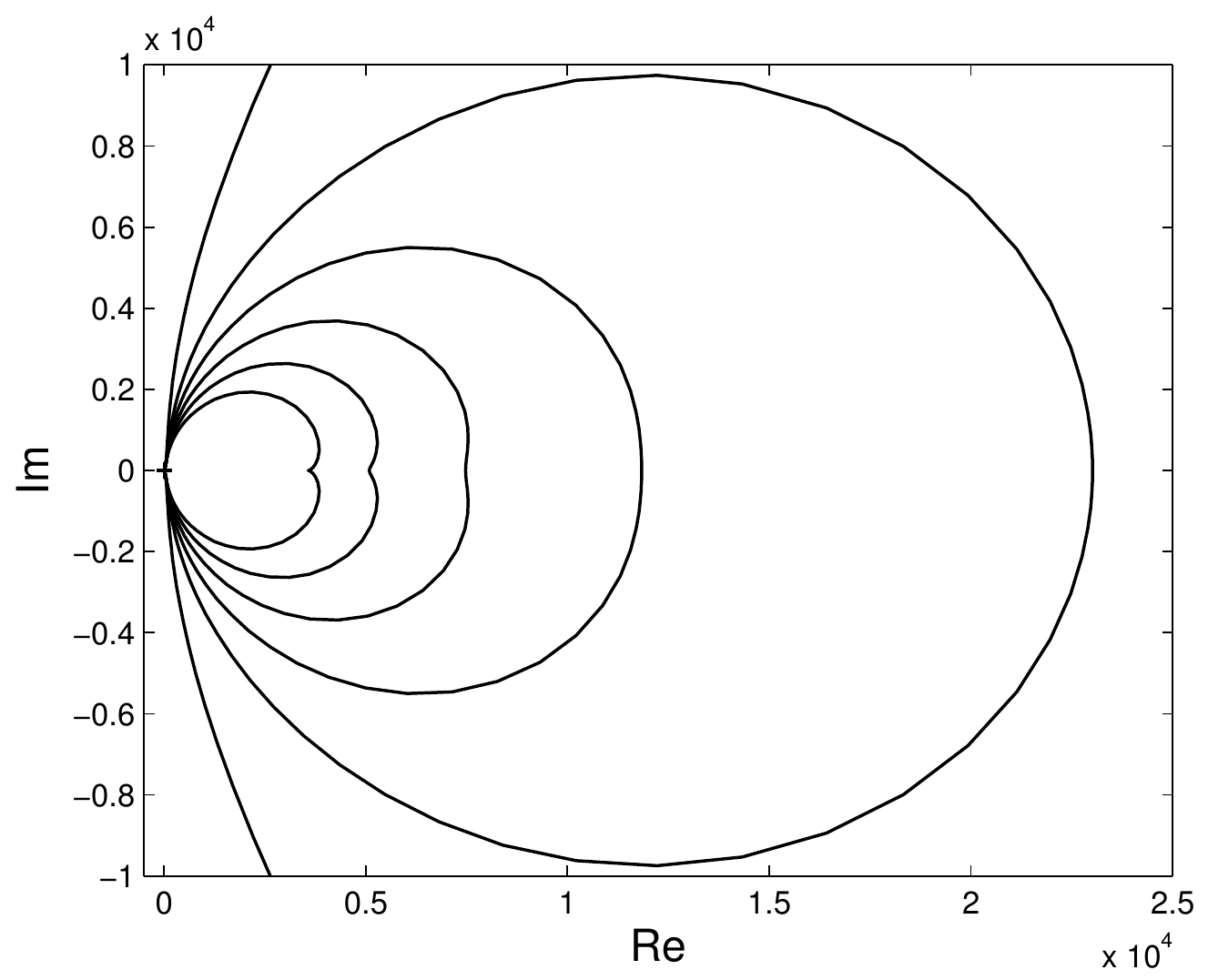}
\label{fig:Large_D}}
\subfloat[]{
\includegraphics[width=0.4\textwidth]{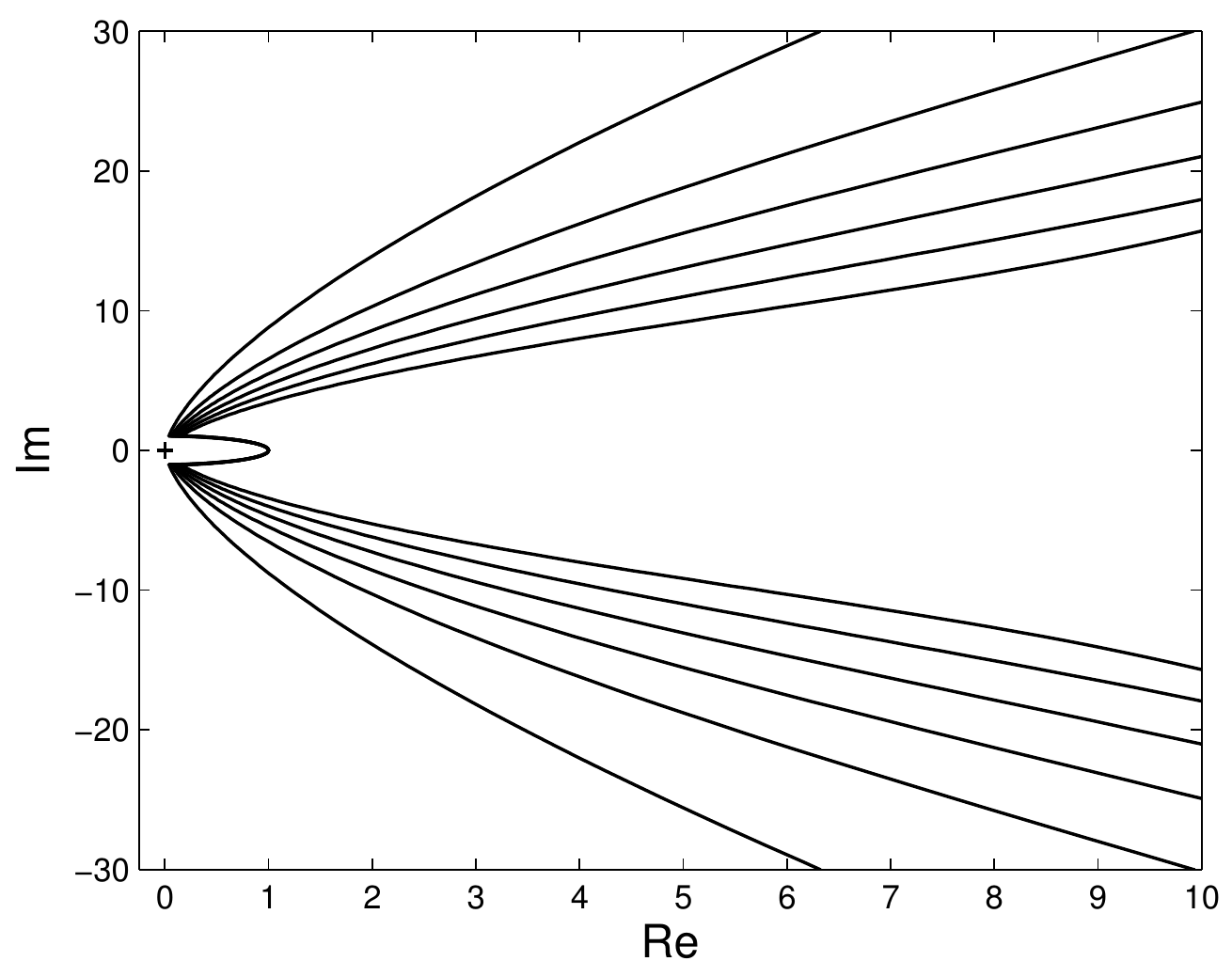}
\label{fig:Large_D_zoom}}\\
\subfloat[]{
\includegraphics[width=0.4\textwidth]{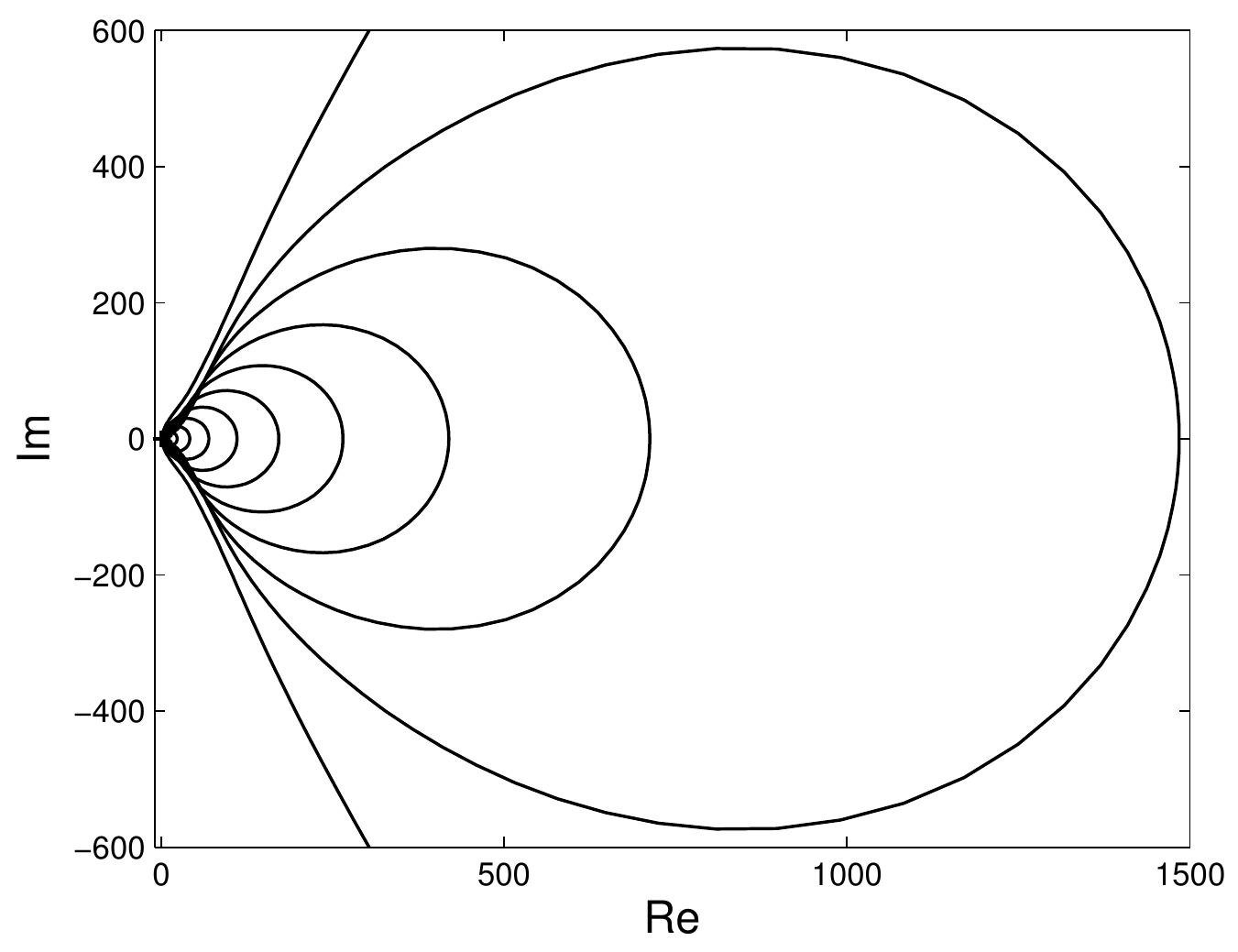}
\label{fig:Small_D}}
\subfloat[]{
\includegraphics[width=0.4\textwidth]{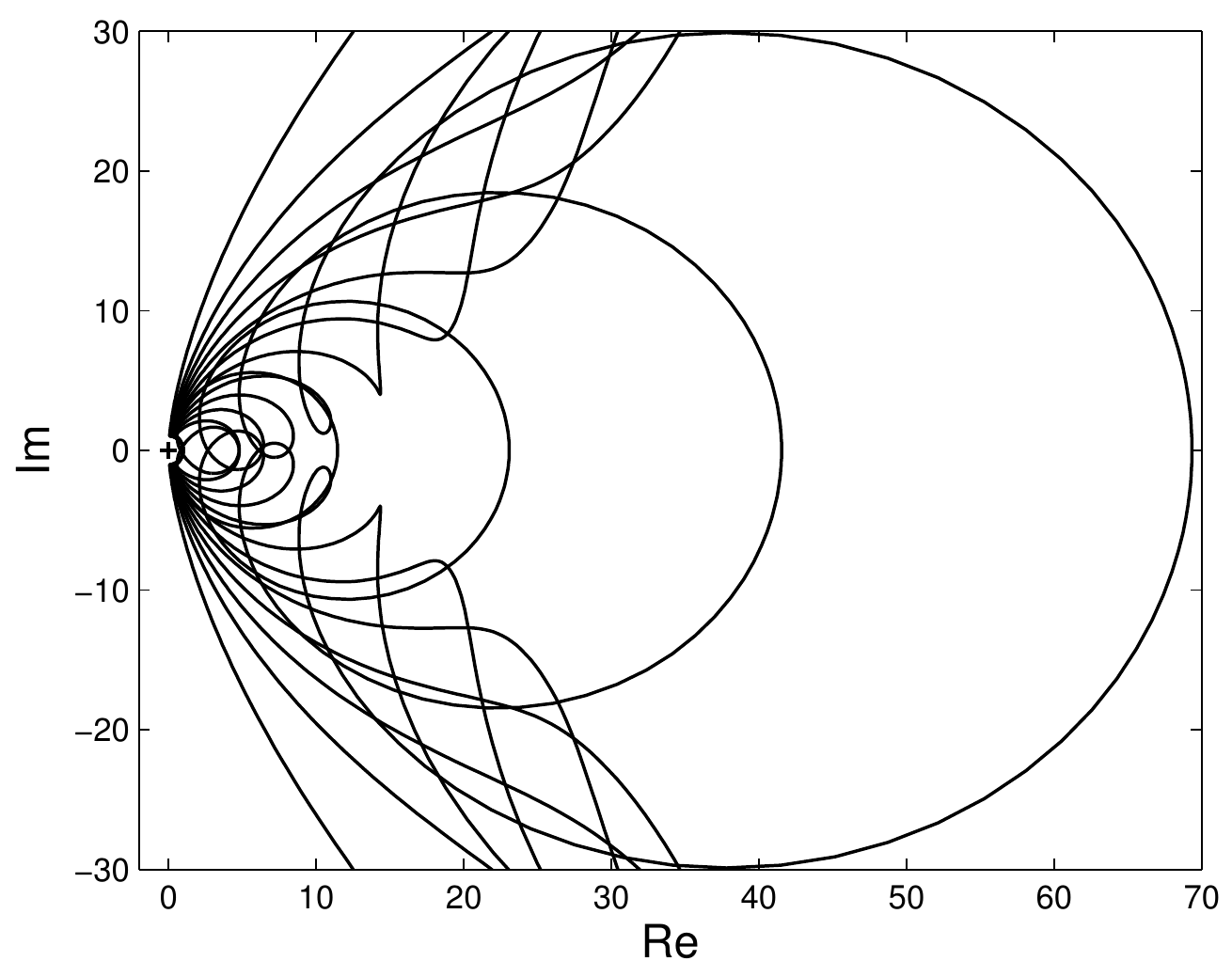}
\label{fig:Small_D_zoom}}
\caption{Evans function output for extreme values of $D$ with $\mathcal{E}_A = 1$.   The parameter values are \protect\subref{fig:Large_D} $D = 15$ with $q$ varying through $[.38,.499]$ and \protect\subref{fig:Small_D} $D=.14$ with $q$ varying through $[.27,.499]$.  Figures \protect\subref{fig:Large_D_zoom} and \protect\subref{fig:Small_D_zoom} are zoomed in versions of \protect\subref{fig:Large_D} and \protect\subref{fig:Small_D} .}
\label{fig:D_evans}
\end{figure}
%
In \textsc{Figure}~\ref{fig:D_evans} \subref{fig:Large_D} and ~\subref{fig:Large_D_zoom}, we see the Evans function output for values of  $D$ in the larger range.
In \textsc{Figure}~\ref{fig:D_evans} \subref{fig:Small_D} and ~\subref{fig:Small_D_zoom}, we see the Evans function output for small values of $D$.

\section{Discussion}\label{sec:discussion}

\subsection{Variations and extensions}
We describe now a handful of variations/extensions of the above analysis that represent interesting possible directions of future research. 
First, we note that the choice of reaction rate $k$ as the inflation parameter in the numerical scheme for approximating solutions of the traveling-wave equation has an unfavorable side effect. Namely, we note that previous Evans-based stability analysis of Majda-type models \cites{HLZ_majda,BZ_majda-znd} have used the freedom in the choice of reaction rate $k$ (related to a choice of spatial scaling)  to keep the half-reaction length inside a reasonable computational domain as $\Ea$ increases. That is, in those previous analyses, $k$ was prescribed as a function of $\Ea$, and larger values of the activation energy were computationally feasible. On the other hand, it should be noted that in those experiments, the high-frequency bounds (analogous to those obtained in Proposition \ref{prop:hfb} above) depend on the value of $k$, and, in the limit of increasing activation energy, the bounds become practically useless. 

In any case, this maneuver is not possible in the framework presented here, and it would be interesting to use $q$ as the inflation parameter and to do a detailed study of those $q$ values in the physical domain $\mathcal{U}$ for which a weak-detonation profile exists. Indeed, as noted above in Remark \ref{rem:failure}, further analysis of the phase space related to the existence of weak detonation profiles may be of interest. 
As noted by Humpherys et al. \cite{HLZ_majda}, the three-dimensional phase space associated with \eqref{eq:tw} has dynamics which are substantially more difficult to describe than those of the planar system associated with Majda's original model \eqref{eq:mm0}. In his original paper, Majda gave a very clear proof establishing the critical value of the heat release parameter $q$ for which weak detonations exist. By contrast, in the physical system, the small-viscosity analysis of Gasser \& Szmolyan \cite{GS_SIAP93} imposes a substantial restriction on the location of the unburned rest state for the existence of a weak detonation profile. This restriction is related to the location of the ignition temperature and, given that the ignition temperature assumption is an artificial mathematical convenience, this also may merit further investigation.
We mention also that other choices for the flux $f$ and the ignition function $\varphi$ might be interesting to analyze. In addition, all of the Evans-based analysis of Majda-type models to date has focused on the simplest possible one-step reaction scheme. It would be quite interesting to repeat the analysis for a model with more realistic chemical kinetics. For example, in the context of the physical system, it is known that in a two-step reaction in which the second reaction is endothermic, new phenomena appear \cite{FickettDavis}.


%

\subsection{Concluding remarks}
As described above, weak detonations are undercompressive, and the stability of such waves has not been treated by weighted-norm techniques. We thus view these results as an important case study illustrating of the flexibility and power of a family of mathematical techniques based on the Evans function. In particular, these results taken together with the pointwise Green function analysis of Lyng et al.\ \cite{LRTZ_JDE07} and the spectral analysis of Humpherys et al.\ \cite{HLZ_majda} give a clear demonstration that both weak (undercompressive) and strong (Lax-type) viscous detonation waves can be analyzed in a common framework.

Taking these results---together with the related analyses of Barker \& Zumbrun \cite{BZ_majda-znd} for the Majda-ZND model (an inviscid Majda model), and of Humpherys et al.\ \cite{HLZ_majda} (viscous strong detonations)---we find a recurring theme of \emph{stability} for detonation-wave solutions of these simplified models for reacting flow. Given the well known instabilities that are known to be present in the physical system, these results suggest that these simplified, scalar models have only limited utility as stand-ins for the physical system.

Perhaps some enhancements or extensions of the model, e.g., a different form of the flux and ignition functions or the inclusion of more realistic chemistry might lead to a model whose detonation-wave solutions have behavior which is more faithful to that of the physical system. In this vein, we note that Radulescu \& Tang \cite{RT_PRL11} have had some success in this endeavor by adding a forcing term to Fickett's model \cite{F_AJP79} which is, more or less, an inviscid Majda model. On the other hand, in ongoing work, several of the authors have extended the Evans-function approach used here to the physical system (the Navier--Stokes equations with chemical reactions); see \cite{rns}. The preliminary results are interesting, and they suggest, in contrast to the results reported above, that the inclusion of diffusive effects has an important effect on the stability characteristics of strong detonation waves. Thus, in addition to treating weak and strong detonations on an equal footing, the Evans techniques that form the basis for this paper also show promise for treating the physical system of equations, and this adds considerably to their mathematical interest.

%

\begin{bibdiv}
\begin{biblist}

\bib{AGJ_JRAM90}{article}{
   author={Alexander, J.},
   author={Gardner, R.},
   author={Jones, C.},
   title={A topological invariant arising in the stability analysis of
   travelling waves},
   journal={J. Reine Angew. Math.},
   volume={410},
   date={1990},
   pages={167--212},
   issn={0075-4102},
}
\bib{AS_NW95}{article}{
   author={Alexander, J. C.},
   author={Sachs, R.},
   title={Linear instability of solitary waves of a Boussinesq-type
   equation: a computer assisted computation},
   journal={Nonlinear World},
   volume={2},
   date={1995},
   number={4},
   pages={471--507},
   issn={0942-5608},
}
\bib{STABLAB}{misc}{
title={STABLAB: A MATLAB-Based Numerical Library for Evans Function Computation},
author={Barker, B.},
author={Humpherys, J.},
author={Zumbrun, K.},
year={2009},
}

\bib{rns}{misc}{
	author={Barker, B.},
	author={Humpherys, J.},
	author={Lyng, G.},
	author={Zumbrun, K.},
	title={Viscous hyperstabilization of detonation waves in one space dimension}, 
	note={in preparation},
	year={2013},
}

\bib{BZ_majda-znd}{misc}{
	title={A Numerical stability investigation of strong ZND detonations for Majda's model},
	author={Barker, B.},
	author={Zumbrun, K.},
	status={preprint},
	date={2010},
	note={{\tt arXiv:1011.1561v1}},
	}
\bib{B_IMAJNA90}{article}{
   author={Beyn, W.-J.},
   title={The numerical computation of connecting orbits in dynamical
   systems},
   journal={IMA J. Numer. Anal.},
   volume={10},
   date={1990},
   number={3},
   pages={379--405},
   issn={0272-4979},
}
\bib{BDG_PD02}{article}{
   author={Bridges, T. J.},
   author={Derks, G.},
   author={Gottwald, G.},
   title={Stability and instability of solitary waves of the fifth-order KdV
   equation: a numerical framework},
   journal={Phys. D},
   volume={172},
   date={2002},
   number={1-4},
   pages={190--216},
   issn={0167-2789},
}
\bib{B_MC01}{article}{
   author={Brin, L. Q.},
   title={Numerical testing of the stability of viscous shock waves},
   journal={Math. Comp.},
   volume={70},
   date={2001},
   number={235},
   pages={1071--1088},
   issn={0025-5718},
}
\bib{BZ_MC02}{article}{
   author={Brin, L. Q.},
   author={Zumbrun, K.},
   title={Analytically varying eigenvectors and the stability of viscous
   shock waves},
   note={Seventh Workshop on Partial Differential Equations, Part I (Rio de
   Janeiro, 2001)},
   journal={Mat. Contemp.},
   volume={22},
   date={2002},
   pages={19--32},
   issn={0103-9059},
}




\bib{CourantFriedrichs}{book}{
   author={Courant, R.},
   author={Friedrichs, K. O.},
   title={Supersonic flow and shock waves},
   note={Reprinting of the 1948 original;
   Applied Mathematical Sciences, Vol. 21},
   publisher={Springer-Verlag},
   place={New York},
   date={1976},
   pages={xvi+464},
}

\bib{EF_MB77}{article}{
	author ={Evans, J.W.},
	author={Feroe, J. A.},
	title={Traveling waves of infinitely many pulses in nerve equations},
	journal={Math. Biosci.},
	volume={37},
	year={1977},
	pages={23--50},
}

\bib{F_AJP79}{article}{
	author={Fickett, Wildon},
	title={Detonation in miniature},
	journal={Am. J. Phys.},
	volume={47},
	date={1979},
	number={12},
	pages={1050--1059},
}
\bib{FickettDavis}{book}{
	author={Fickett, W.},
	author={Davis, W.},
	title={Detonation: Theory and Experiment},
	publisher={Dover},
	year={2000},
	note={corrected reprint of 1979 UC Berkeley Edition},
}

\bib{GZ_CPAM98}{article}{
   author={Gardner, R. A.},
   author={Zumbrun, K.},
   title={The gap lemma and geometric criteria for instability of viscous
   shock profiles},
   journal={Comm. Pure Appl. Math.},
   volume={51},
   date={1998},
   number={7},
   pages={797--855},
   issn={0010-3640},
}
\bib{GS_SIAP93}{article}{
   author={Gasser, I.},
   author={Szmolyan, P.},
   title={A geometric singular perturbation analysis of detonation and
   deflagration waves},
   journal={SIAM J. Math. Anal.},
   volume={24},
   date={1993},
   number={4},
   pages={968--986},
   issn={0036-1410},
}
\bib{Henrici}{book}{
   author={Henrici, P.},
   title={Applied and computational complex analysis. Vol. 1},
   series={Wiley Classics Library},
   publisher={John Wiley \& Sons Inc.},
   place={New York},
   date={1988},
   pages={xviii+682},
   isbn={0-471-60841-6},
}
\bib{HLZ_majda}{article}{
   author={Humpherys, J.},
   author={Lyng, G.},
   author={Zumbrun, K.},
   title={Stability of viscous detonation waves for Majda's model},
   journal={Phys. D},
   note={accepted},
   date={2013},
}

\bib{HSZ_NM06}{article}{
   author={Humpherys, J.},
   author={Sandstede, B.},
   author={Zumbrun, K.},
   title={Efficient computation of analytic bases in Evans function analysis
   of large systems},
   journal={Numer. Math.},
   volume={103},
   date={2006},
   number={4},
   pages={631--642},
   issn={0029-599X},
}

\bib{HZ_PD06}{article}{
   author={Humpherys, J.},
   author={Zumbrun, K.},
   title={An efficient shooting algorithm for Evans function calculations in
   large systems},
   journal={Phys. D},
   volume={220},
   date={2006},
   number={2},
   pages={116--126},
   issn={0167-2789},
}

\bib{Kato}{book}{
   author={Kato, T.},
   title={Perturbation theory for linear operators},
   series={Classics in Mathematics},
   note={Reprint of the 1980 edition},
   publisher={Springer-Verlag},
   place={Berlin},
   date={1995},
   pages={xxii+619},
   isbn={3-540-58661-X},
}

\bib{LY_CMP99}{article}{
   author={Liu, T.-P.},
   author={Yu, S.-H.},
   title={Nonlinear stability of weak detonation waves for a combustion
   model},
   journal={Comm. Math. Phys.},
   volume={204},
   date={1999},
   number={3},
   pages={551--586},
   issn={0010-3616},
}

\bib{LRTZ_JDE07}{article}{
   author={Lyng, G.},
   author={Raoofi, M.},
   author={Texier, B.},
   author={Zumbrun, K.},
   title={Pointwise Green function bounds and stability of combustion waves},
   journal={J. Differential Equations},
   volume={233},
   date={2007},
   number={2},
   pages={654--698},
   issn={0022-0396},
}

\bib{LZ_PD04}{article}{
   author={Lyng, G.},
   author={Zumbrun, K.},
   title={A stability index for detonation waves in Majda's model for
   reacting flow},
   journal={Phys. D},
   volume={194},
   date={2004},
   number={1-2},
   pages={1--29},
   issn={0167-2789},
}

\bib{M_SIAMJAM81}{article}{
   author={Majda, A.},
   title={A qualitative model for dynamic combustion},
   journal={SIAM J. Appl. Math.},
   volume={41},
   date={1981},
   number={1},
   pages={70--93},
   issn={0036-1399},
}
\bib{PSW_PD93}{article}{
   author={Pego, R. L.},
   author={Smereka, P.},
   author={Weinstein, M. I.},
   title={Oscillatory instability of traveling waves for a KdV-Burgers
   equation},
   journal={Phys. D},
   volume={67},
   date={1993},
   number={1-3},
   pages={45--65},
   issn={0167-2789},
}
\bib{PW_PTRSL92}{article}{
   author={Pego, R. L.},
   author={Weinstein, M. I.},
   title={Eigenvalues, and instabilities of solitary waves},
   journal={Philos. Trans. Roy. Soc. London Ser. A},
   volume={340},
   date={1992},
   number={1656},
   pages={47--94},
   issn={0962-8428},
}
\bib{RT_PRL11}{article}{
	author={Radulescu, M. I.},
	author={Tang, J.},
	title={Nonlinear dynamics of self-sustained supersonic reaction waves: Fickett's detonation analogue},
	journal={Phys. Rev. Lett.},
	date={2011},
	number={16},
	volume={107},
	pages={164503},
	}
\bib{RM_SIAMJAM83}{article}{
   author={Rosales, R. R.},
   author={Majda, A.},
   title={Weakly nonlinear detonation waves},
   journal={SIAM J. Appl. Math.},
   volume={43},
   date={1983},
   number={5},
   pages={1086--1118},
   issn={0036-1399},
}
\bib{S_HDS02}{article}{
   author={Sandstede, B.},
   title={Stability of travelling waves},
   conference={
      title={Handbook of dynamical systems, Vol. 2},
   },
   book={
      publisher={North-Holland},
      place={Amsterdam},
   },
   date={2002},
   pages={983--1055},
}

\bib{S_AM76}{article}{
   author={Sattinger, D. H.},
   title={On the stability of waves of nonlinear parabolic systems},
   journal={Advances in Math.},
   volume={22},
   date={1976},
   number={3},
   pages={312--355},
   issn={0001-8708},
}

\bib{S_CMP99}{article}{
   author={Szepessy, A.},
   title={Dynamics and stability of a weak detonation wave},
   journal={Comm. Math. Phys.},
   volume={202},
   date={1999},
   number={3},
   pages={547--569},
   issn={0010-3616},
}

%
%
\bib{Williams}{book}{
	author={Williams, F. A.},
	title={Combustion Theory},
	edition={2},
	publisher={Westview Press},
	year={1985},
}
\bib{ZH_IUMJ98}{article}{
   author={Zumbrun, K.},
   author={Howard, P.},
   title={Pointwise semigroup methods and stability of viscous shock waves},
   journal={Indiana Univ. Math. J.},
   volume={47},
   date={1998},
   number={3},
   pages={741--871},
   issn={0022-2518},
}	 	

\end{biblist}
\end{bibdiv}

\end{document}